\def\ps@pprintTitle{%
 \let\@oddhead\@empty
 \let\@evenhead\@empty
 \def\@oddfoot{\centerline{\thepage}}%
 \let\@evenfoot\@oddfoot}
\newdimen{\algindent}
\algnewcommand{\LeftComment}[1]{\Statex \hspace{\algindent}
  \(\triangleright\) #1}
\newtheorem{lemma}{Lemma}
\theoremstyle{definition}
\newtheorem{remark}{Remark}
\newcommand{\be}{\begin{equation}}
\newcommand{\ee}{\end{equation}}
\newcommand{\ba}{\begin{aligned}}
\newcommand{\ea}{\end{aligned}}
\newcommand{\bea}{\begin{eqnarray}}
\newcommand{\eea}{\end{eqnarray}}
\newcommand{\cplus}{\overline{\mathbb{C}^{+}}}
\newcommand{\tensor}[1]{\rm{\mathbf{#1}}}
\def\nist{Information Technology Laboratory,
  National Institute of Standards and Technology,
  Boulder, CO 80305, USA}
\def\njit{Department of Mathematics Sciences,
New Jersey Institute of Technology,\\
Newark, New Jersey 07102, USA}
\def\odu{Department of Mathematics and Statistics,
Old Dominion University,\\
Norfolk, VA 23529, USA}
\def\csrc{Beijing Computational Science Research Center,
Beijing 100084, China}
\def\papertitle{Evaluation of Abramowitz functions in the right half of the complex plane}
\begin{document}

\begin{frontmatter}

\title{\papertitle}



\author[nist]{Zydrunas Gimbutas}
\address[nist]{\nist}
\ead{zydrunas.gimbutas@nist.gov}
\author[njit]{Shidong Jiang}
\address[njit]{\njit}
\ead{shidong.jiang@njit.edu}
\author[odu,csrc]{Li-Shi Luo}
\address[odu]{\odu}
\address[csrc]{\csrc}
\ead{lluo@odu.edu}

\begin{abstract}
  A numerical scheme is developed for the evaluation of Abramowitz
  functions $J_n$ in the right half of the complex plane.  For
  $n=-1,\, \ldots,\, 2$, the scheme utilizes series expansions for
  $|z|<1$ and asymptotic expansions for $|z|>R$ with $R$ determined by
  the required precision,
and modified Laurent series expansions which
  are precomputed via a least squares procedure to approximate $J_n$
  accurately and efficiently on each sub-region in the intermediate
  region $1\le |z| \le R$. For $n>2$, $J_n$ is evaluated via a
  recurrence relation. The scheme achieves nearly machine precision
  for $n=-1, \ldots, 2$, with the cost about four times of evaluating
  a complex exponential per function evaluation.
\end{abstract}
  
\begin{keyword}
  Abramowitz functions, least squares method, Laurent series
  \MSC 33E20 \sep 33F05 \sep 65D15 \sep 65E05 \sep 65F99
\end{keyword}

\end{frontmatter}

\section{Introduction} 
\label{introduction}

The Abramowitz functions $J_n$ of order $n$, defined by
%
\be
J_n(z)=\int_0^\infty t^n e^{-t^2-z/t} dt , \quad n\in \mathbb{Z},
\label{intrep}
\ee
are frequently encountered in kinetic theory (cf., \textit{e.g.},
\cite{Cercignani2000,Gross1959pof}), where the integral equations
resulting from linearization of the Boltzmann equation have these
functions (cf., \textit{e.g.}, \cite{Cercignani2000, Gross1959pof,
  LiW2015cnf, jiang2016jcp}) as the kernels.  The $n$-th order
Abramowitz function $J_n$ satisfies the third order ODE
\cite{abramowitz1953jmp, handbook}
\be
zJ_n^{'''}-(n-1)J_n^{''}+2J_n=0
\label{jnode}
\ee
and the recurrence relations
\be\label{recurrence0}
J_n'(z)=-J_{n-1}(z),
\ee
\be\label{recurrence1}
2J_n(z)=(n-1)J_{n-2}(z)+zJ_{n-3}(z).
\ee

Research on Abramowitz functions is rather
limited. In~\cite{handbook}, about two pages of Section~27.5 are
devoted to Abramowitz functions, which contain series and asymptotic
expansions, originally developed in
\cite{abramowitz1953jmp,laporte1937pr,zahn1937pr}. In
\cite{cole1979jcp}, numerical computation of Abramowitz functions is
discussed when $z$ is a positive real number, and, in particular, it
is shown that the recurrence relation for $J_n$ is stable in both
directions. In \cite{macleod1992anm}, a more efficient and reliable
numerical algorithm using Chebyshev expansions has been developed for
the evaluation of $J_n$ ($n=0,1,2$) when $z$ is a positive real
number.

For time-dependent or time-harmonic problems in kinetic
theory, evaluation of Abramowitz functions with complex arguments is
often required. However, we are not aware of any work on the
evaluation of Abramowitz functions in complex domains.

In this paper, we develop an efficient and accurate numerical scheme
for the evaluation of Abramowitz functions when its argument $z$ is in
the right half of the complex plane (denoted as $\cplus =\{z\in
\mathbb{C} |\operatorname{Re}(z)\ge 0\}$) for $n\ge -1$. We first note
that Chebyshev expansions are not good representations in the complex
domain since Chebyshev polynomials are orthogonal polynomials only
when the argument is real.  Second, when $|z|$ is small, say, less
than $r$ for some $r>0$, a series expansion can be used to evaluate
$J_n(z)$ accurately with small number of terms.  Third, when $|z|$ is
large, say, greater than $R$ for some $R>0$, the truncated asymptotic
expansion can be used to evaluate $J_n(z)$ accurately.

We now consider the intermediate region $D=\{z\in\cplus\, |\, r\le |z|
\le R\}$, where neither the series expansion nor the asymptotic
expansion can be used to achieve the required precision. Since $0$ and
$\infty$ are the only singular points of the ODE \eqref{jnode}
satisfied by $J_n$, standard ODE theory \cite{coddington1955ode} shows
that $J_n$ is analytic in $\mathbb{C}\setminus \{0\}$. Thus, $J_n$
admits an infinite Laurent series representation in $D$ by theory of
complex variables \cite{ahlfors1978}. One may naturally ask whether
$J_n(z)$ can be well approximated by a truncated Laurent series in
$D$. It turns out that such approximation requires excessively large
number of terms to achieve high accuracy, even if we do not consider
the difficulty encountered when solving it numerically. Furthermore,
this global approximation is extremely ill-conditioned due to the fact
that $J_n$ behaves like an exponential function asymptotically, making
its dynamic range too large to be resolved numerically with high
accuracy and rendering the scheme useless.

We propose two techniques to deal with the extreme ill-conditioning associated
with the global approximation of $J_n$ in $D$. 
First, we extract out the leading factor in the asymptotic
expansion of $J_n(z)$ and make a change of variable as follows: \be
J_n(z)=\sqrt{\frac{\pi}{3}}\left(\frac{\nu}{3}\right)^{n/2} e^{-\nu}
U_n(\nu), \qquad \nu=3\left(\frac{z}{2}\right)^{2/3}.
\label{asymtransform}
\ee
It has been shown in \cite{abramowitz1953jmp,laporte1937pr} that
$U_n(\nu)$ also satisfies a third order ordinary differential equation
(ODE) with $0$ a regular singular point and $\infty$ an irregular
singular point. Thus, $U_n(\nu)$ is
analytic for $z\in D$ and therefore can be represented
by an infinite Laurent series in $\nu$ in the transformed domain.
The main advantage of working with
$U_n(\nu)$ instead of $J_n(z)$ is that $U_n(\nu)$ has much smaller dynamic
range and thus admits more accurate and efficient approximation.

Next, we divide the intermediate region $D$ into several sub-regions
$D_i=\{z\in \cplus\, |\, r_i\le |z| \le r_{i+1}\}$ ($i=0,\ldots, M-1$,
$r_0=r$, $r_M=R$). By symmetry, we may further restrict ourself to
consider the quarter-annulus domain $Q_i=\{z\in \mathbb{C}\, |\,
\operatorname{Re}(z)\ge 0, \operatorname{Im}(z)\ge 0, r_i\le |z| \le
r_{i+1}\}$ ($i=0,\ldots, M-1$, $r_0=r$, $r_M=R$).  On each sub-region
$Q_i$, we approximate $U_n(\nu)$ via a modified Laurent series in
$\nu$ where the coefficients of the series are obtained by solving a
least squares problem where the linear system is set up by matching
the function values with the values of the modified Laurent series
representation on a set of $N$ points on the boundary of $Q_i$. The
least squares problem is still ill-conditioned and the conditioning
becomes worse as $N$ increases, but its solution can be used to
produce very accurate approximation to the function being
approximated.

Here, we would like to remark that recently least squares method has been applied
to construct accurate and stable approximation for many classes of functions.
In \cite{barnett2008jcp}, it is used together with method of fundamental
solutions to solve boundary value problems for the Helmholtz equation.
In \cite{gonnet2011etna}, it is used to construct rational approximation
for functions on the unit circle.
In \cite{adcock2016frames,adcock2018frames}, it is shown that a wide class of functions
can be approximated in an accurate and well-conditioned manner using frames and the
least squares method. In \cite{greengard2018sisc}, the least squares method is used
to construct efficient and accurate sum-of-Gaussians approximations for a class of
kernels in mathematical physics. Needless to say, the least squares problem itself
has to be solved using suitable algorithms. Many such algorithms exist (see, for example,
\cite{demmel1997,golub2013,gu1999simaa,paige2006simaa,trefethen1997}).

For $n\ge 3$, we apply the recurrence relation \eqref{recurrence1} to
compute $J_n(z)$. We note that the recurrence relation only needs the
values of $J_n$ for $n=0, 1, 2$. Since many applications in kinetic
theory require the evaluation of $J_{-1}$, we provide the direct
evaluation of $J_{-1}$ as well via our scheme since it is more
efficient than using the recurrence relation.

Clearly, the scheme presented in this paper
may be applied to the accurate evaluation of a very broad class of
special functions in complex domains. Very often these special functions
satisfy an ODE with a finite number of singular points. Therefore, they
are analytic in complex domains excluding singular points and branch cuts. Complex
analysis then ensures that Laurent series is a suitable representation
to such functions in the domain. With a careful choice of the domain and
suitable transformation, the least squares method becomes a reliable
tool for constructing efficient, accurate and stable approximation
for these functions.

The paper is organized as follows. Section~\ref{sec:prelim} collects analytical
results used in the construction of the algorithm.
Section~\ref{sec:alg} discusses numerical algorithms for the evaluation
of Abramowitz functions. Section~\ref{sec:examples} illustrates
the performance and accuracy of the algorithm. The paper is concluded
with a short discussion on possible extensions and applications of the
work.

\section{Analytical apparatus}
\label{sec:prelim}

The series expansion of $J_n$ takes the form
\be
\label{seriesexpansion}
2J_n(z)=\sum_{k=0}^\infty(a_k^{(n)}\ln z+b_k^{(n)})z^k. 
\ee
For $n=1$, the coefficients can be found in
\cite[\S27.5.4]{handbook}
with $a_0^{(1)}=a_1^{(1)}=0$, $a_2^{(1)}=-1$, $b_0^{(1)}=1$, $b_1^{(1)}=-\sqrt{\pi}$,
$b_2^{(1)}=3(1-\gamma)/2$, and 
\be
\label{coefab} 
a_k^{(1)}= - \frac{2a_{k-2}^{(1)}}{k(k-1)(k-2)}, \qquad b_k^{(1)}= -
\frac{2b_{k-2}^{(1)}+(3k^2-6k+2)a_k^{(1)}}{k(k-1)(k-2)}, \quad k\ge 3, 
\ee 
where $\gamma\approx 0.577215664901532860606512$ is Euler's constant.
For $n=-1, 0$, the coefficients can be obtained from term-by-term
differentiation of \eqref{seriesexpansion}, together with \eqref{recurrence0}:
\be
\label{coefab0} 
a_k^{(n)}= -(k+1) a_{k+1}^{(n+1)},
\qquad b_k^{(n)}= -(k+1) b_{k+1}^{(n+1)} - a_{k+1}^{(n+1)}, \quad k\ge 0.
\ee
For $n=2$, the coefficients can be obtained from term-by-term integration 
of \eqref{seriesexpansion} together with $J_2(0)=\sqrt{\pi}/4$, i.e.,
$a_0^{(2)} = 0$, $b_0^{(2)}=\sqrt{\pi}/2$, and
\be
\label{coefab2} 
a_k^{(2)}= - \frac{a_{k-1}^{(1)}}{k},
\qquad b_k^{(2)}= - \frac{b_{k-1}^{(1)}}{k} + \frac{a_{k-1}^{(1)}}{k^2}, \quad k\ge 1.
\ee
We have the following lemma regarding the convergence of
the power series $\sum_{k=0}^\infty a_k^{(n)}z^k$
and $\sum_{k=0}^\infty b_k^{(n)}z^k$ in the series expansion \eqref{seriesexpansion}.

\begin{lemma}\label{seriesconv}
For $n=-1,\ldots,2$, the power series
$\sum_{k=0}^\infty a_k^{(n)}z^k$ and $\sum_{k=0}^\infty b_k^{(n)}z^k$
in \eqref{seriesexpansion} converge in $\mathbb{C}$.
\end{lemma}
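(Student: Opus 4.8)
The plan is to establish the result for $n=1$ first and then propagate it to $n=-1,0,2$ using the coefficient relations \eqref{coefab0} and \eqref{coefab2}. For $n=1$, the cleanest route is to analyze the recurrence \eqref{coefab} directly. From $a_k^{(1)} = -2a_{k-2}^{(1)}/(k(k-1)(k-2))$ one sees that the nonzero $a_k^{(1)}$ occur only for $k$ of a fixed parity (here $k$ even, since $a_0^{(1)}=a_1^{(1)}=0$ and $a_2^{(1)}=-1$), and the magnitude shrinks super-geometrically: $|a_k^{(1)}| \le C/\big((k-2)!!\big)$-type bounds, or more simply, by induction $|a_k^{(1)}| \le 2^{k/2}/k!$ or any similar crude bound showing $\limsup_k |a_k^{(1)}|^{1/k} = 0$. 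Hence the radius of convergence of $\sum a_k^{(1)} z^k$ is infinite. For $b_k^{(1)}$, the recurrence is $b_k^{(1)} = -\big(2b_{k-2}^{(1)} + (3k^2-6k+2)a_k^{(1)}\big)/(k(k-1)(k-2))$; since the $a_k^{(1)}$ part is already controlled and the polynomial factor $3k^2-6k+2$ is dominated by the $k(k-1)(k-2)$ in the denominator, an induction of the same flavor gives $|b_k^{(1)}| \le C^k/\lfloor k/2\rfloor!$ or similar, again forcing $\limsup_k |b_k^{(1)}|^{1/k}=0$ and an infinite radius of convergence.

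Once $n=1$ is settled, the cases $n=0$ and $n=-1$ follow from \eqref{coefab0}: since $a_k^{(n)} = -(k+1)a_{k+1}^{(n+1)}$ and $b_k^{(n)} = -(k+1)b_{k+1}^{(n+1)} - a_{k+1}^{(n+1)}$, multiplying a sequence with zero exponential growth rate by the polynomial factor $(k+1)$ (and shifting the index) does not change the fact that the $k$-th root of the modulus tends to $0$; thus the series for $n=0$ converge on all of $\mathbb{C}$, and applying the same step again gives $n=-1$. For $n=2$, the relation \eqref{coefab2} expresses $a_k^{(2)}$ and $b_k^{(2)}$ in terms of $a_{k-1}^{(1)}/k$, $b_{k-1}^{(1)}/k$, and $a_{k-1}^{(1)}/k^2$; dividing by $k$ or $k^2$ only helps convergence, so again $\limsup_k |a_k^{(2)}|^{1/k} = \limsup_k |b_k^{(2)}|^{1/k} = 0$. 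An alternative, essentially equivalent, packaging is to note that each of the four series is, up to the polynomial index factors, a reindexed/differentiated/integrated version of the $n=1$ series, and termwise differentiation and integration preserve the radius of convergence.

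The main obstacle — really the only nontrivial point — is obtaining a clean closed-form bound on $|a_k^{(1)}|$ and $|b_k^{(1)}|$ from the nonlinear-looking recurrences and verifying that the polynomial numerators (the $3k^2-6k+2$ term, and the $(k+1)$ factors introduced by differentiation) are harmless. This is handled by a straightforward strong induction: one guesses an ansatz of the form $|b_k^{(1)}| \le A \cdot B^k / \Gamma(k/2 + 1)$ (or a factorial-type denominator adapted to the parity), checks the base cases $k=0,1,2$, and verifies that the recurrence preserves the ansatz because the division by $k(k-1)(k-2)$ dominates the quadratic numerator for all $k \ge 3$. It then remains only to observe that any bound of this shape has vanishing $\limsup_k (\cdot)^{1/k}$, so by the Cauchy–Hadamard formula the radius of convergence is $+\infty$ for all four power series, which is exactly the claim. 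One should also remark, for completeness, that the logarithmic terms $a_k^{(n)}\ln z$ in \eqref{seriesexpansion} are irrelevant to this lemma, which concerns only the coefficient power series themselves.
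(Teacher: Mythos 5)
Your proposal is correct and follows essentially the same route as the paper: factorial-type decay of $a_k^{(1)}$ and $b_k^{(1)}$ extracted from the recurrence \eqref{coefab} (the paper uses the closed forms \eqref{ak} and \eqref{b2kp1} together with an induction for the even $b$-coefficients, whereas you use cruder inductive bounds throughout), followed by the root test and propagation to $n=-1,0,2$ via \eqref{coefab0} and \eqref{coefab2}. The differences are only in bookkeeping; your weaker bounds still give $\limsup_k|\cdot|^{1/k}=0$, which suffices.
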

\begin{proof}
  For $n=1$, direct calculation shows that
  \be\label{ak}
  a_{2k-1}=0, \quad a_{2k}^{(1)}=\frac{(-1)^k2}{(2k)!(k-1)!}, \quad k>0.
  \ee
  Thus, the radius of convergence for $\sum_{k=0}^\infty a_k^{(n)}z^k$
  is $\infty$ by the ratio test and the series converges for all complex
  numbers.
  We now split $\sum_{k=0}^\infty b_k^{(n)}z^k$ into the odd part and the even
  part:
  \be
  \sum_{k=0}^\infty b_k^{(1)}z^k = z\sum_{k=0}^\infty b_{2k+1}^{(1)}(z^2)^k
  +\sum_{k=1}^\infty b_{2k}^{(1)}(z^2)^k.
  \ee
  For the odd part, direct calculation shows
  \be\label{b2kp1}
  b_{2k+1}^{(1)}=\frac{(-2)^kb_1^{(1)}}{(2k+1)!(2k-1)!!}.
  \ee
  Using the root test and Stirling's formula for factorials \cite[p.~201]{ahlfors1978},
  we observe that the odd part converges for all complex numbers.
  For the even part, we claim that
  \be\label{b2kbound}
  |b_{2k}^{(1)}|<\frac{2}{[(k-1)!]^3}, \quad k\ge 1.
  \ee
  We prove \eqref{b2kbound} by induction. First, \eqref{b2kbound} holds for $k=1$
  by direct calculation. Now, assume \eqref{b2kbound} holds for $2k-2$, i.e.,
  \be
  |b_{2k-2}^{(1)}|<\frac{2}{[(k-2)!]^3}.
  \ee
  By \eqref{ak},
  it is easy to see that
  \be\label{akbound}
  |a_{2k}^{(1)}|<\frac{1}{2^k[(k-1)!]^3}, \quad k>1.
  \ee
  using the second equation in \eqref{coefab}, we have
  \be
  \ba
  |b_{2k}^{(1)}|&\le \frac{2|b_{2k-2}^{(1)}|}{2k(2k-1)(2k-2)}
  +\frac{3|a_{2k}^{(1)}|}{k-1}+\frac{2|a_{2k}^{(1)}|}{2k(2k-1)(2k-2)}\\
  &<\frac{2|b_{2k-2}^{(1)}|}{2k(2k-1)(2k-2)}+\frac{1}{[(k-1)!]^3}\\
  &<\frac{4}{2k(2k-1)(2k-2)[(k-2)!]^3}+\frac{1}{[(k-1)!]^3}\\
  &<\frac{2}{[(k-1)!]^3},
  \ea
  \ee
  where the first inequality follows from the triangle inequality, the second
  one follows from \eqref{akbound}, the third one follows from the induction
  assumption. Thus, the even part also converges for all complex numbers by
  the comparison and root
  tests, and Stirling's formula. Finally, the convergence of the power series
  for $n=-1, 0, 2$ follows from \eqref{coefab0}, \eqref{coefab2}, \eqref{ak},
  \eqref{b2kp1}, \eqref{b2kbound}, the comparison and root tests, and Stirling's formula.
\end{proof}

Even though \eqref{seriesexpansion} was originally derived under the
assumption that $z$ is positive real, it makes sense for any $z\neq
0$. Furthermore, it provides a natural analytic
continuation~\cite[p.~283]{ahlfors1978} of $J_n$ to $\mathbb{C}$ with
the branch cut along negative real axis and the principal branch for
$\ln z$ chosen to be, say, $\operatorname{Im}(\ln z)\in (-\pi,\pi]$.

The asymptotic expansion of $J_n$ is given by
\cite[\S27.5.8]{handbook}:
\be
\label{asymexpansion}
J_n(z) \sim \sqrt{\frac{\pi}{3}}\left(\frac{\nu}{3}\right)^{n/2}e^{-\nu}
\left(c_0^{(n)}+\frac{c_1^{(n)}}{\nu}+\frac{c_2^{(n)}}{\nu^2}+\cdots\right),
\quad z\rightarrow \infty ,
\ee
where $\nu=3(z/2)^{2/3}$, $c_0^{(n)}=1$, $c_1^{(n)}=(3n^2+3n-1)/12$, and
\be
\label{coefc}
\ba
12(k+2)c_{k+2}^{(n)} = & -(12k^2+36k-3n^2-3n+25)c_{k+1}^{(n)}\\
&+\frac{1}{2}(n-2k)(2k+3-n)(2k+3+2n)c_k^{(n)}, \quad k\ge 0.
\ea
\ee
Once again, \eqref{asymexpansion} was originally derived
under the assumption that $z$ is real
and positive \cite{abramowitz1953jmp,laporte1937pr}. One may, however,
verify that the expansion inside the parentheses on the right hand
side of \eqref{asymexpansion} is a formal solution to the third order
ODE satisfied by $U_n$ in \eqref{asymtransform}. Furthermore, the
exponential factor decays when
$\arg{z}\in(-\frac{3\pi}{4},\frac{3\pi}{4})$.  Hence,
\eqref{asymexpansion} is valid for any $z\in \cplus$ as $z\rightarrow
\infty$.

The following lemma is the theoretical foundation of our algorithm.
\begin{lemma}\label{maxprinciple}
  Suppose that $D \subset \mathbb{C}$ is a closed bounded domain that
  does not contain the origin and the function $f$ is analytic in $D$.
  Let $L(z)=\sum_{k=-N_1}^{N_2}c_k z^k$. Then
  \begin{enumerate}[(i)]
  \item if $|f(z)-L(z)|\le \epsilon$ for $z\in \partial D$, then
    $|f(z)-L(z)|\le \epsilon$ for $z\in D$;
  \item if $|f(z)-L(z)|/|f(z)|\le \epsilon$ for $z\in\partial D$
    and $f$ has no zeros in $D$, then
    $|f(z)-L(z)|/|f(z)|\le \epsilon$ for $z\in D$.
  \end{enumerate}
\end{lemma}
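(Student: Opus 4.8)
The plan is to reduce both parts to the classical maximum modulus principle for analytic functions. For part (i), observe that $L(z)=\sum_{k=-N_1}^{N_2}c_k z^k$ is a finite linear combination of integer powers of $z$, each of which is analytic on $\mathbb{C}\setminus\{0\}$; since $D$ does not contain the origin, $L$ is analytic on $D$. Hence $g(z):=f(z)-L(z)$ is analytic on $D$ (and continuous up to $\partial D$, as $D$ is closed and bounded). The maximum modulus principle then gives $\max_{z\in D}|g(z)|=\max_{z\in\partial D}|g(z)|\le\epsilon$, which is exactly the claim. One should be slightly careful that $D$ need not be connected or simply connected as literally stated, but the maximum modulus principle applies componentwise to each connected component, and the boundary of each component is contained in $\partial D$, so the conclusion still holds.

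For part (ii), the natural move is to pass to the ratio $h(z):=\bigl(f(z)-L(z)\bigr)/f(z)=1-L(z)/f(z)$. Since $f$ has no zeros in $D$ and is analytic there, $1/f$ is analytic on $D$, and therefore $h$ is analytic on $D$ and continuous on $\partial D$. Applying the maximum modulus principle to $h$ yields $\max_{z\in D}|h(z)|=\max_{z\in\partial D}|h(z)|\le\epsilon$, i.e.\ $|f(z)-L(z)|/|f(z)|\le\epsilon$ throughout $D$, as desired. Again the argument is applied on each connected component separately.

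I do not expect any genuine obstacle here: this is essentially a direct invocation of the maximum modulus principle, and the only points requiring a word of care are (a) noting that $L$, being a Laurent polynomial, is analytic away from the origin so that $f-L$ is analytic on $D$; (b) in part (ii), using the zero-freeness of $f$ to guarantee that the quotient is analytic; and (c) handling the fact that $D$ is stated only as a ``closed bounded domain'' rather than a connected open-with-boundary set, which is dealt with by working component by component. If one wished to avoid even the mild topological subtlety, one could instead argue via the fact that $|g|$ (resp.\ $|h|$) is subharmonic on the interior and continuous on the compact set $D$, so its maximum is attained on $\partial D$; but the textbook maximum modulus principle as in \cite{ahlfors1978} suffices.
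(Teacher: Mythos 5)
Your proposal is correct and follows essentially the same route as the paper: the paper's proof simply invokes the analyticity of $L$ on $D$ (since $D$ excludes the origin) together with the maximum principle of \cite{ahlfors1978}, and for (ii) the same argument applied to the quotient, which is exactly what you spell out. Your added care about connected components and the subharmonicity alternative are fine elaborations but not a different method.
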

\begin{proof}
  This follows from the analyticity of $L(z)$ on $D$ and the maximum
  principle \cite[p.~133]{ahlfors1978}.
\end{proof}
\section{Numerical Algorithms}\label{sec:alg}
\subsection{Series and asymptotic expansions}
As we have shown in Lemma~\ref{seriesconv}, the coefficients $a_k^{(n)}$ and $b_k^{(n)}$
in~\eqref{coefab}--\eqref{coefab2} decay very rapidly and the
corresponding series expansions converge for any $z \ne 0$. However,
they cannot be used for numerical calculation for large $|z|$
due to cancellation errors and increasing number of terms for achieving the
desired precision. Thus, we will use the series expansions
only for $|z|<1$ (i.e., $r=1$). In this region, both power series
$\sum_{k=0}^\infty a_k^{(n)} z^k$ and $\sum_{k=0}^\infty b_k^{(n)} z^k$ converge exponentially fast
and very few terms are needed to reach the desired precision.

The coefficients $c_k^{(n)}$ in~\eqref{coefc} diverge rapidly and the
asymptotic expansion \eqref{asymexpansion} has to be truncated in order
to be of any use. For any truncated asymptotic expansion, it is well-known
that its accuracy increases as $|z|$ increases. For a prescribed precision $\epsilon_{\rm mach}$,
one needs to determine $N_a$ --- the number of terms in the truncated
series, and $R$ with $|z|>R$ the applicable region of the truncated
series. This is straightforward to determine numerically.
We have found that $N_a=18$ and $R=120$ are sufficient to achieve
IEEE extended precision for $J_n$ ($n=-1,\ldots,2$).

\subsection{Construction of the modified Laurent series for the intermediate region}
\label{sec:precomp}

We now discuss the evaluation of $J_n$ in the intermediate
region $D=\{z\in\cplus\, |\, r\le |z| \le R\}$.
First, it is easy to see that $J_n(\bar{z})=\bar{J_n}(z)$ from its
integral representation \eqref{intrep}. Thus,
we will only discuss the evaluation of $J_n$ in the first quadrant
$Q=\{z\in\mathbb{C}\, |\, r\le |z| \le R, , 0\le \arg{z}\le \frac{\pi}{2}\}$.
As discussed in the introduction, it is very difficult to directly
approximate $J_n(z)$ in $Q$ due to its large dynamic range. 
We use the transformation~\eqref{asymtransform} and consider the approximation
of $U_n(\nu)$ instead, $U_n$ has a very small dynamic range.
Figure~\ref{fig1} shows ${\rm Log}_{10}$ of $|J_{-1}(z)|$ in $Q$
on the left and $|U_{-1}(z)$ in $Q$ on the right, where the left panel
shows that the magnitude of $J_{-1}(z)$
ranges from $10^{-20}$ to $10^{0}$, and the right panel shows that the magnitude
of $U_{-1}(z)$ ranges from $0.95$ to $1$. Other $J_n(z)$ and $U_n(z)$ ($n=0, 1, 2$)
exhibit similar pattern.
Thus, we will consider the evaluation
of $U_n(\nu)$ in $Q$.

\begin{figure}[t]
\centering
  \includegraphics[height=40mm]{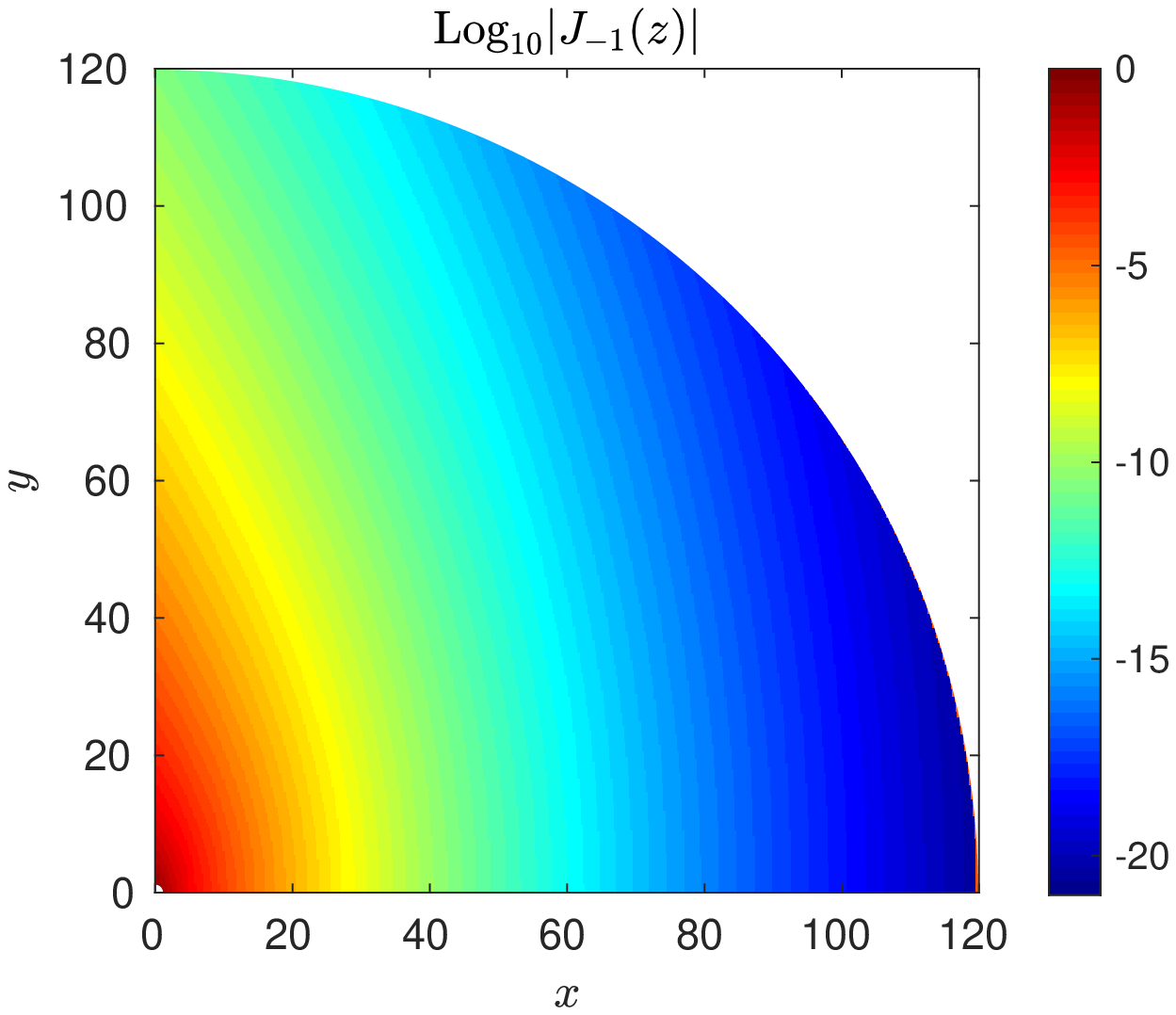}
  \includegraphics[height=40mm]{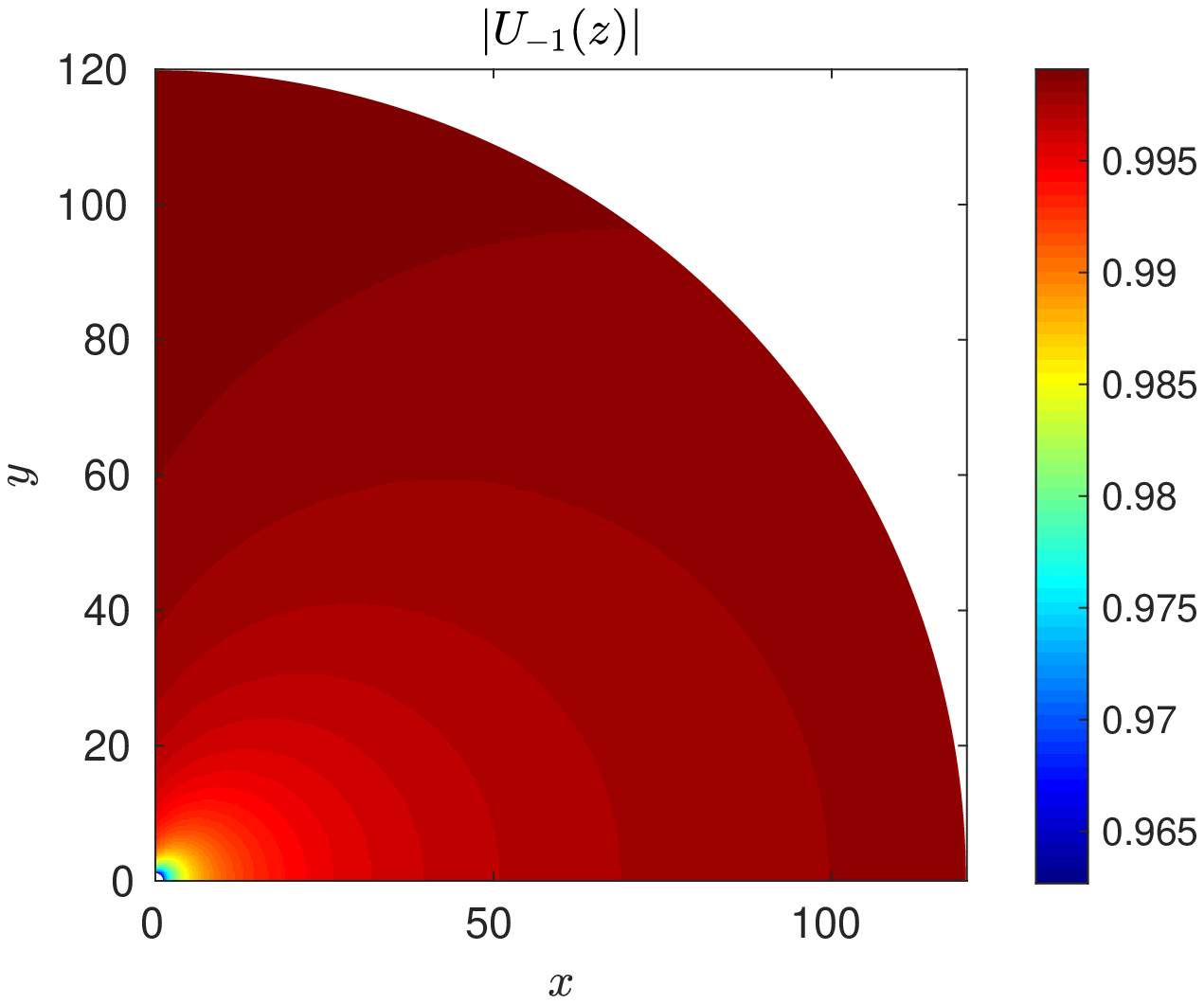}
  \caption{Dynamic ranges of $J_{-1}(z)$ and $U_{-1}(z)$ in $Q$. For comparison purpose,
  both figures are plotted in the variable $z$.}
\label{fig1}
\end{figure}

To this end, we divide $Q$ into several quarter-annulus domains:
\be
Q_i=\{z\in \mathbb{C} | r_i\le |z| \le r_{i+1}, 0\le \arg{z}\le \frac{\pi}{2}\},
\,\, i=0,\ldots, M-1, \,\, r_0=1, r_M=R.
\ee
We will try to approximate $U_n(\nu)$ in each $Q_i$ via a modified
Laurent series
\be
U_n(\nu)\simeq L_n^{(i)}(\nu) = \sum_{k=-N_1}^{N_2}d_k^{(i)} \nu^k, \qquad z\in Q_i.
\label{laurentapprox}
\ee
As noted before, $U_n(\nu)$ satisfies a third order ODE with $0$ and
$\infty$ as the only singular points
\cite{abramowitz1953jmp,laporte1937pr}.  Thus, $U_n(\nu)$ is analytic
in $Q_i$. By Lemma~\ref{maxprinciple}, in order to guarantee the
accuracy of the approximation in the whole domain $Q_i$, it is
sufficient to ensure the same accuracy is achieved on the boundary of
$Q_i$, \textit{i.e.},
\be\label{errorbound1}
\left|U_n(\nu)-\sum_{k=-N_1}^{N_2}d_k^{(i)} \nu^k\right|\le \epsilon,
\qquad z \in \partial Q_i.
\ee

The error-bound in \eqref{errorbound1} is achieved by solving the least squares problem
\be
\tensor{A} \tensor{d}^{(i)}=\tensor{f},
\label{leastsquares}
\ee
where 
\be
A_{jk}=\nu_j^k, \qquad f_j = U_n(\nu_j), \quad j=1,\ldots,4N_b ,
\ee
where $\nu_j=3(z_j/2)^{2/3}$, and $z_j$ are chosen to be the images of
Gauss-Legendre nodes on each segment of $\partial Q_i$, $N_b$ is
chosen to ensure that the error of approximation of $U_n(\nu)$ by the
corresponding Legendre polynomial interpolation on each segment of
$\partial Q_i$ is bounded by $\epsilon$.
The right hand side $\tensor{f}$ in \eqref{leastsquares} is computed
\textit{via} symbolic software system {\sc Mathematica} to at least
$50$ digits. In other words, we do not use the actual analytic Laurent
series to approximate $U_n$ on each quarter-annulus $Q_i$. Instead, a
numerical procedure is applied to find much more efficient
``modified'' Laurent series for approximating $U_n$ on each $Q_i$.

The linear system~\eqref{leastsquares} is ill-conditioned.  However,
since we always use $\tensor{d}^{(i)}$ in the modified Laurent series to
evaluate $U_n$, we will obtain high accuracy in function evaluation in
the entire sub-region as long as the residual of the least squares
problem~\eqref{leastsquares} is small by the maximum principle.

The least squares solver also reveals the numerical
rank of $\tensor{A}$, which is used to obtain the optimal value of
$N_{\rm T}=N_2-N_1+1$, the total number of terms in the modified
Laurent series.  It is then straightforward to use a simple search to
find the value for $N_1$, which completes the algorithm
for finding a nearly optimal and highly accurate modified Laurent
series approximation for $U_n$ in $Q_i$.

\begin{remark}
  We would like to emphasize that the modified Laurent series may not
  be unique, but this non-uniqueness has no effect on the accuracy of
  the approximation.
\end{remark}

\begin{remark}
  We have computed the integrals
  \be\label{indef}
  I_n=\int_{\partial Q} \frac{J_n'(z)}{J_n(z)}dz
  =-\int_{\partial Q} \frac{J_{n-1}(z)}{J_n(z)}dz
  \ee
  for $n=-1,\ldots,2$ and found numerically that they are all close to zero.
  By the argument principle \cite[p.~152]{ahlfors1978},
  we have
  \be
  I_n=2\pi i(Z_n-P_n),
  \ee
  where $Z_n$ and $P_n$ denote respectively the number of zeros and poles of $J_n(z)$
  inside $\partial Q$.
  Since $J_n(z)$ is analytic in $Q$, it has no poles in $Q$, i.e., $P_n=0$.
  Thus, the fact that $I_n$ is very close to zero shows that $Z_n=0$, that is,
  $J_n$ has no zeros in $Q$.
  Further numerical investigation shows that $|U_n(\nu)|$ ranges from $0.95$ to $1.7$
  on $\partial Q$.
  Combining these two facts, we conclude that the absolute error bound on the
  approximation of $U_n$ gives
  roughly the same relative error bound.
\end{remark}

\subsection{Evaluation of $J_n$ for $n=-1,\, \ldots,\, 2$}

Once the coefficients of modified Laurent series for each sub-region
are obtained and stored, the evaluation of $J_n(z)$ is
straightforward. That is, we first compute $|z|$ to decide on which
region the point lies, then use the proper representation to evaluate
$J_n(z)$ accordingly.  We summarize the algorithm for calculating
$J_{n}(z)$ for $z\in \cplus$, $n=-1,\, \ldots,\, 2$ in
Algorithm~\ref{alg1}.

\begin{algorithm} 
\caption{Evaluation of $J_n(z)$ for $z\in\cplus$}
\label{alg1}
\begin{algorithmic}
  \Procedure{Abram}{$z$,$f$}
\LeftComment{Input parameter: $z$ - the complex number for which the Abramowitz function $J_n$
is to be evaluated.}
\LeftComment{Output parameter: $f$ - the value of  Abramowitz function $J_n(z)$.}
\State \textbf{assert} $\mbox{Re}(z)\ge 0$.
\If {$|z|\le 1$} \Comment $z$ is in the series expansion region.
    \State Use the series expansion \eqref{seriesexpansion} to evaluate $f=J_n(z)$.
\ElsIf {$|z| \ge 120$} \Comment $z$ is in the asymptotic region.
    \State Set $\nu=3(z/2)^{2/3}$.
    \State Use the asymptotic expansion \eqref{asymexpansion} to compute $U_n(\nu)$.
    \State Set $f=\sqrt{\frac{\pi}{3}}\, \left(\frac{\nu}{3}\right)^{n/2}\, e^{-\nu}\, U_n(\nu)$.
\Else \Comment $z$ is in the intermediate region.
    \State Set $\nu=3(z/2)^{2/3}$.
    \State Use a precomputed modified Laurent series expansion \eqref{laurentapprox} to compute $U_n(\nu)$.
    \State Set $f=\sqrt{\frac{\pi}{3}}\, \left(\frac{\nu}{3}\right)^{n/2}\, e^{-\nu}\, U_n(\nu)$.
    \EndIf
    \EndProcedure
\end{algorithmic}
\end{algorithm}
\begin{remark}
  All these expansions can be converted into a polynomial of certain transformed variable
  multiplying with some factor. We use Horner's method \cite[\S4.6.4]{knuth1997} to evaluate
  the polynomial in optimal arithmetic operations.
\end{remark}

\begin{remark}
The accuracy of $J_n(z)$ deteriorates
as $|z|$ increases since the condition number of evaluating the
exponential function $e^{-\nu}$ is $|\nu|$. This is unavoidable in any
numerical scheme as the phenomenon is related to physical
ill-conditioning of evaluating $J_n(z)$ for the argument with large
magnitude.
\end{remark}

\subsection{Evaluation of $J_n$ for $n>2$}

In \cite{cole1979jcp}, it is shown that \eqref{recurrence1} is stable
in both directions when $z$ is a positive real number. We have
implemented the forward recurrence to evaluate $J_n(z)$ for $n>2$. We
have not observed any numerical instability during our numerical tests
for $z\in\cplus$.

\section{Numerical results}
\label{sec:examples}

We have performed numerical experiments on a laptop with a 2.10GHz
Intel Core i7-4600U processor and 4GB of RAM.

For the series expansion
\eqref{seriesexpansion}, a straightforward calculation shows that
$18$ terms in $\sum b_k^{(n)} z^k$ and $9$ nonzero
terms in $\sum a_k^{(n)} z^k$ are needed to reach the IEEE extended
precision ($\epsilon_{\rm mach}\approx 1.09 \cdot 10^{-19}$)
for $J_n$ ($n=-1,\ldots,2$).
For the asymptotic expansion
\eqref{asymexpansion}, we find that it is sufficient to choose
$N_a=18$, $R=120$ for the IEEE extended precision.
All coefficients are precomputed with $50$ digit
precision.

For the intermediate region, we divide $|z|$ on $[1, 120]$ into three
subintervals $[1, 3]$, $[3, 15]$, $[15, 120]$ and $Q$ into $Q_1$,
$Q_2$, $Q_3$, respectively. We use quadruple precision to
carry out the precomputation step and solve the least squares problem 
with $10^{-20}$ accuracy. We have found that for
$Q_1$ we need $N_2=11$, $N_T=30$ for
$J_0$ and $J_1$, $N_2=10$, $N_T=32$ for $J_{-1}$, and $N_2=11$,
$N_T=32$ for $J_2$.  For all four functions $J_n$
($n=-1,0,1,2$), we need $N_2=0$, $N_T=30$ for $Q_2$ and
$N_2=0$, $N_T=20$ for $Q_3$. The coefficients of modified Laurent
series for $J_n$ ($n=-1,0,1,2$) on $Q_i$ ($i=1,2,3$) are listed
in Tables~\ref{tabm1a}--\ref{tab2c} in~\ref{coeftables}.

\begin{remark}
  The coefficients in Tables~\ref{tabm1a}--\ref{tab2c}
  for $Q_2$ and $Q_3$ do not have small norms. However, for $Q_2$,
  $\left|\frac{1}{\nu}\right|\le \frac{1}{3(3/2)^{(2/3)}}=0.254\ldots$;
  and for $Q_3$,
  $\left|\frac{1}{\nu}\right|\le \frac{1}{3(15/2)^{(2/3)}}\approx 0.087$.
  It is easy to see that terms $c_j\left(\frac{1}{\nu}\right)^j$
  decrease as $j$ increases. Alternatively, we could consider
  the Laurent series of the form $\sum \tilde{c}_j \left(\frac{\nu_i}{\nu}\right)^j$
  with $\nu_i=3(r_i/2)^{(2/3)}$ ($r_i$ is the lower bound for $|z|$ in $Q_i$).
  Then the coefficient vector $\tilde{c}$ will have small norm, as required in
  \cite{barnett2008jcp,adcock2016frames}. However,
  this corresponds to the column scaling in the least squares matrix and almost
  all methods for solving the least squares problems do column normalization.
  Thus, it has no effect on the accuracy of the solution and stability
  of the algorithm.
\end{remark}

\begin{remark}
The partition of the sub-regions is by no means optimal or
unique. There is an obvious trade-off between the number of
sub-regions and the number of terms in the modified Laurent series
(the total number of terms in the modified Laurent expansion slightly
increases as the regions gets closer to the origin). This suggests
that one may use a finer partition for the regions closer to the
origin. We have tried to divide the intermediate region into $14$
regions with $Q_i=\{z\in\cplus|(\sqrt{2})^{i-1}\le |z|\le (\sqrt{2})^i\}$
($i=1,\ldots,14$), and we observe that only $20$ terms
are needed for all regions. However, our numerical
experiments indicate that the partition has little effect on the
overall performance (i.e., speed and accuracy) of the algorithm.
\end{remark}

We first check the accuracy of Algorithm~\ref{alg1}. The reference function values
are calculated via {\sc Mathematica} to at least 50 digit accuracy.
The error is measured in terms of maximum relative error, i.e.,
\[E=\max_i\frac{|\hat{J}_n(z_i)-\tilde{J}_n(z_i)|}{|\tilde{J}_n(z_i)},\]
where $\tilde{J}_n(z_i)=e^{\nu_i} J_n(z_i)$ ($\nu_i=3(z_i/2)^{2/3}$)
is the reference value of the scaled Abramowitz function
computed via {\sc Mathematica},
and $\hat{J}_n(z_i)$ is the value computed via our algorithm.
The points $z_i$ are sampled randomly with uniform distribution
in both its magnitude and angle in $\cplus$. Table~\ref{table1} lists
the errors for evaluating $\tilde{J}_n$ ($n=-1,0,1,2$) in
various regions, where we observe that the errors are within $10\epsilon_{\rm mach}$
with the machine epsilon $\epsilon_{\rm mach}\approx 2.22\times 10^{-16}$ for IEEE double
precision. In general, the errors in the first intermediate
region $Q_1$ are slightly bigger due to mild cancellation errors.

\begin{table}[!ht]
  \caption{The relative $L^\infty$ error of Algorithm~\ref{alg1} over
    $100,000$ uniformly distributed
    random points in $\cplus$. The reference value is computed via {\sc Mathematica} to
    at least 50 digit accuracy. $S$ denotes the series expansion region and $A$ denotes
    the asymptotic expansion region.
  }
\sisetup{
  table-format = 1.1e-2,
  table-auto-round=true,
  tight-spacing=true
}
\centering
\begin{tabular}{lSSSSS}
  \toprule
  &     {S}     &    ${Q_1}$ &     ${Q_2}$ &    ${Q_3}$ &    {A}\\
  \midrule
${J_{-1}}$  &  1.52E-15&  2.10E-15&  4.38E-16&  6.38E-16&  8.64E-16\\
\midrule
${J_{ 0}}$  &  1.31E-15&  2.43E-15&  2.21E-16&  2.18E-16&  2.17E-16\\
\midrule
${J_{ 1}}$  &  1.06E-15&  2.40E-15&  4.65E-16&  5.96E-16&  7.97E-16\\
\midrule
${J_{ 2}}$  &  1.20E-15&  2.93E-15&  5.55E-16&  8.44E-16&  1.17E-15\\
 \bottomrule
 \end{tabular}
\label{table1}
\end{table}

Since all three representations (i.e., modified Laurent series, series and
asymptotic expansions) mainly involve polynomials of degree less than 30,
the algorithm takes
about constant time per function evaluation in $\cplus$. 
We have tested the CPU time of Algorithm~\ref{alg1} for evaluating $\tilde{J_n}(z)$
and compared it with
that of evaluating the complex exponential $e^z$.
The results are shown in Table~\ref{table2}.
We observe that on average the CPU time of each
function evaluation is about four times of that for the complex exponential
evaluation.

\begin{table}[!ht]
  \caption{The total CPU time in seconds for evaluating $\tilde{J_n(z)}$ using
    Algorithm~\ref{alg1} and the exponential function $e^z$
    over $100,000$ uniformly distributed
    random points in $\cplus$. 
  }
\sisetup{
  table-format = 1.1e-2,
  table-auto-round=true,
  tight-spacing=true
}
\centering
\begin{tabular}{lSSSSS}
  \toprule
  & ${J_{-1}(z)}$ & ${J_0(z)}$ & ${J_1(z)}$ & ${J_2(z)}$& ${e^z}$ \\
  \midrule
  ${T}$ &  3.60E-02&  3.20E-02&  3.60E-02&  3.20E-02& 8.00E-03\\
 \bottomrule
 \end{tabular}
\label{table2}
\end{table}

For $n>2$, we have tested the stability of
the forward recurrence relation \eqref{recurrence1}
for evaluating $J_n$ in $\cplus$. Numerical experiments indicate
that the forward recurrence relation is stable
in $\cplus$. The relative errors are shown in Table~\ref{table3}
for a typical run.
\begin{table}[!ht]
  \caption{The maximum relative error for evaluating $J_{100}$ using
    the forward recurrence relation \eqref{recurrence1} over $100,000$
    uniformly distributed random points in the domain $\{z\in \mathbb{C}| \operatorname{Re}(z)\ge0, 0<|z|<1000\}$. The reference
    value is calculated using {\sc Mathematica} with $240$-digit precision
    arithmetic.
  }
\sisetup{
  table-format = 1.1e-2,
  table-auto-round=true,
  tight-spacing=true
}
\centering
\begin{tabular}{SSSSS}
  \toprule
      {S} &  ${Q_1}$ &  ${Q_2}$ &  ${Q_3}$ & {A}\\
      \midrule
  1.29E-15&  2.86E-15&  1.26E-15&  2.03E-15&  3.66E-15\\
 \bottomrule
 \end{tabular}
\label{table3}
\end{table}

\section{Conclusions and further discussions}

We have designed an efficient and accurate algorithm for the
evaluation of Abramowitz functions $J_n$ in the right half of the
complex plane.  Some useful observations in the design of the
algorithm are applicable for evaluating many other special functions
in the complex domain.  First, it is better to pull out the leading
asymptotic factor from the given function when $|z|$ is large. Second,
the maximum principle reduces the dimensionality of the approximation
problem by one. Third, the least squares scheme is generally a
reliable and accurate method to find an approximation of a prescribed
form. That is, analytical representations should be used with caution
even if they are available, as they often lead to large cancellation
error or very inefficient approximations or both.

Finally, though we have used truncated Laurent series representation for approximating
Abramowitz functions in the intermediate region, there are many other
representations for function approximations. This includes truncated series expansion,
rational functions (see, for example, \cite{gonnet2011etna}), etc. We have actually tested
the truncated series expansion in the sub-region (i.e., $Q_1$) closest to the origin
for $J_n$. Our numerical experiments indicate that the performance is about the same
as the one presented in this paper.

\section*{Acknowledgments}

S.~Jiang was supported by the National Science Foundation under grant
DMS-1720405, and by the Flatiron Institute, a division of the Simons
Foundation.  
L.-S.~Luo was supported by the National Science Foundation under grant
DMS-1720408. 
The authors would like to thank Vladimir Rokhlin at Yale
University for his unpublished pioneer work on the evaluation of Hankel
functions in the complex plane and Manas Rachh at the Flatiron
Institute, Simons Foundation for helpful discussions. Certain
commercial software products and equipment are identified in this
paper to foster understanding. Such identification does not imply
recommendation or endorsement by the National Institute of Standards
and Technology, nor does it imply that the software
products and equipment identified are necessarily the best available for the
purpose.

\appendix 

\section{Zeros of $J_n(z)$}\label{app1}

We have used {\sc NIntegrate} in {\sc Mathematica} to evaluate $I_n$
defined in \eqref{indef}. When {\sc WorkingPrecision} is set to $100$,
$|I_n|$ are about $10^{-59}$ for $n=-1,0,1,2$. When it is set to $200$,
the values of $|I_n|$ decrease to $10^{-160}$. By the argument priniciple,
$I_n$ can only take integral multiples of $2\pi i$. Thus, the numerical
calculation clearly shows that $J_n$ ($n=-1,0,1,2$) have no zeros
in the intermediate region $Q$. Analytically, we can only show that
$J_n$ has no zeros in the sector $|\operatorname{arg}(z)|\le \frac{\pi}{4}$.
The proof is presented below.
\begin{lemma}\label{lem1}
  If $z_0\in \mathbb{C}$ is a zero of $J_n(z)$, then also $\bar{z}_0$.
\end{lemma}
\begin{proof}
  By the integral representation of $J_n(z)$ in \eqref{intrep}, we have
  $J_n(\bar{z}_0)=\bar{J}_n(z_0)$ and the lemma follows.
\end{proof}
\begin{lemma}\label{lem3}
  Suppose that $n\ge 0$. Then
  $J_n(z)$ has no zero in the sector $|\arg{z}|\le \frac{\pi}{4}$.
\end{lemma}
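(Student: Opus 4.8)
The plan is to work directly from the integral representation \eqref{intrep} and show that for $n\ge 0$ the integrand cannot cancel when $\arg z$ is small. Write $z=\rho e^{i\theta}$ with $\rho>0$ and $|\theta|\le \pi/4$. Then
\be
J_n(z)=\int_0^\infty t^n e^{-t^2} e^{-z/t}\,dt
      =\int_0^\infty t^n e^{-t^2} e^{-(\rho/t)\cos\theta}\,
        e^{-i(\rho/t)\sin\theta}\,dt .
\ee
The first three factors in the integrand are real and positive (for $n\ge 0$, $t^n\ge 0$), so the sign of the real and imaginary parts of $J_n(z)$ is controlled entirely by the phase $\varphi(t):=-(\rho/t)\sin\theta$. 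The idea is that this phase ranges over $(-\infty,0)$ or $(0,\infty)$ as $t$ ranges over $(0,\infty)$, but we can extract a dominant contribution. The cleanest route: rotate the contour. Since $|\theta|\le\pi/4$ we have $|\theta/3|\le \pi/12<\pi/2$, which suggests substituting $t=s\,e^{i\theta/2}$ or more aggressively deforming to kill the oscillation — but one must check the integrand decays at both ends to justify the deformation. The substitution $t = \sqrt{\rho}\,e^{i\alpha}\,u$ for a suitable real $\alpha$ would symmetrize $t^2$ and $\rho/t$; choosing $\alpha$ so that $2\alpha$ and $\theta-\alpha$ have a common controllable sign is the key bookkeeping.

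Concretely, I would substitute $t=\sqrt{\rho}\,\tau$ to get $J_n(z)=\rho^{(n+1)/2}\int_0^\infty \tau^n \exp(-\rho\tau^2 - \sqrt\rho\, e^{i\theta}/\tau)\,d\tau$, and then rotate $\tau\mapsto \tau e^{i\beta}$ with a small real $\beta$ chosen to make the argument of the exponential have negative real part uniformly and a phase of one sign. The exponent becomes $-\rho\tau^2 e^{2i\beta} - (\sqrt\rho/\tau) e^{i(\theta-\beta)}$. For the contour rotation to be legitimate we need $\cos 2\beta>0$ and $\cos(\theta-\beta)>0$, i.e. $|\beta|<\pi/4$ and $|\theta-\beta|<\pi/2$, both easily arranged. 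After rotation the imaginary part of the exponent is $-\rho\tau^2\sin 2\beta - (\sqrt\rho/\tau)\sin(\theta-\beta)$; if we can pick $\beta$ so that $\sin 2\beta$ and $\sin(\theta-\beta)$ are both $\ge 0$ (when $\theta\ge 0$, take $\beta=\theta/3$, so $2\beta=2\theta/3$ and $\theta-\beta=2\theta/3$ have the same sign and both lie in $[0,\pi/6]\subset[0,\pi)$), then the imaginary part of the exponent is $\le 0$ throughout, so $\operatorname{Im}$ of the integrand has a fixed sign $\dots$ — actually what we get is that the exponent has nonpositive imaginary part, hence the integrand lies in a half-plane through the origin, hence so does $J_n(z)$, and in particular $J_n(z)\ne 0$. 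For $\theta<0$ use $\beta=\theta/3$ again (or Lemma~\ref{lem1}).

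The main obstacle is making the contour deformation rigorous: one must verify that the integrand of $\int \tau^n e^{-\rho\tau^2 - \sqrt\rho e^{i\theta}/\tau}\,d\tau$ decays fast enough on the two circular arcs at radius $\to 0$ and radius $\to\infty$ to discard them, and that it has no singularity in the closed sector swept out (it does not, since the only trouble is at $\tau=0$, excluded, and $\tau=\infty$). Near $\tau=0$ the factor $e^{-\sqrt\rho e^{i\theta}/\tau}$ with $\operatorname{Re}(e^{i\theta}/\tau)>0$ forces super-exponential decay provided the arc stays in the region $\operatorname{Re}(e^{i(\theta-\arg\tau)})>0$, which constrains how far we may rotate — this is exactly where $|\theta|\le\pi/4$ (rather than $|\theta|<3\pi/4$) is used, since we need the arc from argument $0$ to argument $\beta=\theta/3$ to remain in that good region. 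Near $\tau=\infty$ the factor $e^{-\rho\tau^2 e^{2i\beta}}$ with $\cos 2\beta>0$ gives Gaussian decay. Once the arcs are controlled, the sign argument above is immediate, and combined with Lemma~\ref{lem1} and the restriction $n\ge 0$ (needed only so that $\tau^n$ does not introduce a branch issue or sign change on the positive axis), this completes the proof.
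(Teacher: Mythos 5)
Your contour rotation itself is legitimate: with $z=\rho e^{i\theta}$, $0\le\theta\le\pi/4$, and $\beta=\theta/3$, one gets
\[
J_n(z)=\rho^{(n+1)/2}e^{i(n+1)\theta/3}\int_0^\infty \tau^n\exp\Bigl(-\bigl(\rho\tau^2+\tfrac{\sqrt{\rho}}{\tau}\bigr)e^{2i\theta/3}\Bigr)\,d\tau ,
\]
and the arcs do vanish since $\cos 2\psi>0$ and $\cos(\theta-\psi)>0$ for all $\psi$ between $0$ and $\beta$. The gap is the concluding step: from ``the exponent has nonpositive imaginary part'' you infer that ``the integrand lies in a half-plane through the origin.'' That implication is false. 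The imaginary part of the exponent is $-\bigl(\rho\tau^2+\sqrt{\rho}/\tau\bigr)\sin(2\theta/3)$, which tends to $-\infty$ both as $\tau\to0^+$ and as $\tau\to\infty$ whenever $\theta\neq0$; since $w\mapsto e^{w}$ wraps every horizontal strip of height $2\pi$ around the full circle, an exponent whose imaginary part is merely $\le 0$ but unbounded places no constraint on the argument of the integrand, which in fact winds around the origin infinitely many times along the ray. A fixed-sign (half-plane) argument would need the total phase variation to stay below $\pi$, i.e. $\bigl(\rho\tau^2+\sqrt{\rho}/\tau\bigr)\sin(2\theta/3)\le\pi$ on all of $(0,\infty)$, which fails for every $\theta\neq0$ — indeed the minimum of $\rho\tau^2+\sqrt{\rho}/\tau$ is $3(\rho/2)^{2/3}=\nu$, so even the least oscillation is large once $|z|$ is large. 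A symptom of the trouble is that your argument never genuinely uses $|\theta|\le\pi/4$ (the rotation is valid for $|\theta|<\pi/2$), so if the half-plane step were sound you would have proved absence of zeros in essentially the whole right half plane, a statement the paper supports only numerically. Repairing this route would require quantitative saddle-point estimates showing the neighborhood of $\tau_*=(2\sqrt{\rho})^{-1/3}$ dominates, and that at best yields a large-$|z|$ statement (the paper's subsequent lemma), not the sector result.

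The paper's proof is entirely different and does use both hypotheses essentially: assuming $J_n(z_0)=0$, hence $J_n(\bar z_0)=0$, it sets $f(t)=J_n(z_0t)$, $g(t)=J_n(\bar z_0t)$ and exploits the ODE \eqref{jnode} with integration by parts over $[1,\infty)$ to derive
\[
4(y_0^2-x_0^2)\int_1^\infty |J_n(z_0t)|^2\,dt=|z_0|^2\,|J_n'(z_0)|^2+(2n+1)\,|z_0|^2\int_1^\infty |J_n'(z_0t)|^2\,dt>0,
\]
forcing $y_0^2>x_0^2$, i.e. $|\arg z_0|>\pi/4$; here $n\ge0$ enters through the factor $2n+1>0$, whereas in your sketch that hypothesis does no real work.
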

\begin{proof}
  Let $z_0=x_0+iy_0 \in \cplus$ be a zero of $J_n(z)$. Then by Lemma~\ref{lem1},
  $\bar{z}_0$ is also a zero of $J_n(z)$. Consider functions $f(t)=J_n(z_0t)$ and
  $g(t)=J_n(\bar{z}_0t)$. Then $f(1)=g(1)=0$, and $f$, $g$ and their derivatives
  decay exponentially fast to $0$ as $t\rightarrow \infty$ by the asymptotic
  expansion \eqref{asymexpansion}.

  The differential equation \eqref{jnode} implies that
  \be\label{fode}
  tf'''(t)-(n-1)f''(t)+2z_0^2f(t)=0,
  \ee
  \be\label{gode}
  tg'''(t)-(n-1)g''(t)+2\bar{z}_0^2g(t)=0.
  \ee

  Multiplying both sides of \eqref{fode} by $g$, integrating both sides
  from $1$ to $\infty$, and performing integration by parts, we obtain
  \be\label{int1}
  \ba
  0&=\int_1^\infty [tf'''g-(n-1)f''g+2z_0^2fg]dt\\
  &=tgf''\bigg\vert_1^\infty-\int_1^{\infty} f''(g+g't)dt+
  \int_1^\infty [-(n-1)f''g+2z_0^2fg]dt\\
  &=\int_1^\infty [-tf''g'-nf''g+2z_0^2fg]dt\\
  &=\int_1^\infty [-tf''g'+nf'g'+2z_0^2fg]dt.
  \ea
  \ee
  Similarly,
  \be\label{int2}
  0  =\int_1^\infty [-tf'g''+nf'g'+2\bar{z}_0^2fg]dt.
  \ee
  Moreover,
  \be\label{int3}
  \ba
  \int_1^\infty [-tf'g''-tf'g'']dt&=-\int_1^\infty td(f'g')\\
  &=-tf'g'\bigg\vert_1^\infty+\int_1^{\infty} f'g'dt\\
  &=f'(1)g'(1)+\int_1^{\infty} f'g'dt.
  \ea
  \ee
  Adding \eqref{int1}, \eqref{int2} and using \eqref{int3} to simplify the result, we obtain
  \be\label{int4}
  0=f'(1)g'(1)+(2n+1)\int_1^{\infty} f'g'dt+2(z_0^2+\bar{z}_0^2)\int_1^{\infty} fgdt.
  \ee
  Rearranging \eqref{int4}, we have
  \be\label{int5}
  4(y_0^2-x_0^2)\int_1^\infty |J_n(z_0t)|^2dt=
  |z_0|^2\left|J'_n(z_0)\right|^2+
  (2n+1)|z_0|^2\int_1^\infty \left|J'_n(z_0t)\right|^2dt.
  \ee
  Since the right side of \eqref{int5} and the integral
  on its left side are both positive, we must have $y_0^2-x_0^2>0$
  and the lemma follows.
\end{proof}

\begin{lemma}
  $J_n(z)$ has no zero in $D=\{z\in \cplus\vert |z|>R\}$, where $R$
  is sufficiently large.
\end{lemma}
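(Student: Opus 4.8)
The plan is to use the asymptotic expansion \eqref{asymexpansion} directly. Since $J_n(z)\sim \sqrt{\pi/3}\,(\nu/3)^{n/2}e^{-\nu}$ with $\nu = 3(z/2)^{2/3}$, and the exponential factor $e^{-\nu}$ never vanishes, the only way $J_n$ could vanish for large $|z|$ is if the bracketed asymptotic series $c_0^{(n)} + c_1^{(n)}/\nu + \cdots$ vanished. But $c_0^{(n)}=1$, so this series tends to $1$ as $|z|\to\infty$. The task is to make this rigorous by controlling the remainder of the truncated asymptotic expansion.

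First I would recall (or invoke) the standard fact that the asymptotic expansion \eqref{asymexpansion} comes with an error bound: for $z\in\cplus$, writing $U_n(\nu) = \sum_{k=0}^{N_a-1} c_k^{(n)}\nu^{-k} + \mathcal{R}_{N_a}(\nu)$, one has $|\mathcal{R}_{N_a}(\nu)| \le C_{N_a}|\nu|^{-N_a}$ for $|\nu|$ large enough, uniformly in $\arg z$ on the closed right half-plane (this uniformity is exactly the content of the remark following \eqref{coefc}, namely that the exponential factor decays for $\arg z \in (-3\pi/4, 3\pi/4)$, which contains the closure of the right half-plane). Then I would write
\[
\left| U_n(\nu) - 1 \right| \le \sum_{k=1}^{N_a-1} |c_k^{(n)}|\,|\nu|^{-k} + C_{N_a}|\nu|^{-N_a} \le \frac{K}{|\nu|}
\]
for some constant $K$ depending only on $n$ and $N_a$, valid for $|\nu| \ge \nu_0$. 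Choosing $R$ large enough that $|z|>R$ forces $|\nu| = 3(|z|/2)^{2/3} > \max(\nu_0, 2K)$, we get $|U_n(\nu)| \ge 1 - K/|\nu| > 1/2 > 0$. Since $J_n(z) = \sqrt{\pi/3}\,(\nu/3)^{n/2}e^{-\nu}U_n(\nu)$ and none of the three factors $(\nu/3)^{n/2}$, $e^{-\nu}$, $U_n(\nu)$ vanishes, $J_n(z)\ne 0$, which is the claim.

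The main obstacle is the uniformity of the asymptotic remainder bound over the \emph{closed} right half-plane, i.e., including the imaginary axis where $\arg z = \pm\pi/2$. On the rays $\arg z = \pm\pi/2$ we have $\arg\nu = \pm\pi/3$, so $\operatorname{Re}\nu = 3(|z|/2)^{2/3}\cos(\pi/3) = \tfrac{3}{2}(|z|/2)^{2/3} > 0$ stays positive and bounded away from $0$ relative to $|\nu|$; hence $e^{-\nu}$ still decays and the standard Watson-type estimate for the remainder of an asymptotic series solution of the ODE for $U_n$ goes through with a uniform constant. I would either cite this from the literature on asymptotics of ODE solutions at an irregular singular point (e.g., the references \cite{abramowitz1953jmp, laporte1937pr} that establish \eqref{asymexpansion}, or a standard text) or, if a self-contained argument is wanted, bound $\mathcal{R}_{N_a}$ by its integral representation along a steepest-descent-type contour and note the bound is uniform on $|\arg\nu|\le\pi/3$. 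Everything else — the algebra above and the non-vanishing of the elementary factors — is routine.
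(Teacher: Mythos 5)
Your argument is correct, but it is genuinely different from the one in the paper. You argue directly from the asymptotic expansion \eqref{asymexpansion}: since $U_n(\nu)\to 1$ uniformly on $\cplus$ as $|z|\to\infty$ and the factors $(\nu/3)^{n/2}$ and $e^{-\nu}$ never vanish, $J_n$ cannot vanish once $|z|$ exceeds some $R$. The paper instead recycles the integration-by-parts identities \eqref{int1}--\eqref{int2} built for the sector lemma (with $f(t)=J_n(z_0t)$, $g(t)=J_n(\bar z_0t)$ for a putative zero $z_0$), \emph{subtracts} them to obtain the identity \eqref{lem4.2} equating $4x_0y_0\int_1^\infty|J_n(z_0t)|^2dt$ to an integral involving $J_{n-1}$ and $J_{n-2}$, and then inserts only the \emph{leading} terms of the asymptotics to find that the two sides satisfy $\sin(2\theta_0)\sim-\sin(2\theta_0/3)$, i.e.\ have opposite signs unless $\theta_0=0$; positive real zeros are then excluded by positivity of the integral representation \eqref{intrep}. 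What your route buys is brevity and a quantitative conclusion ($|U_n(\nu)|\ge 1/2$ for $|z|>R$, in fact in any closed subsector of $|\arg z|<3\pi/4$); what it costs is that the whole burden falls on a uniform remainder bound for \eqref{asymexpansion} on the closed right half-plane, which you correctly identify as the crux and propose to take from the theory of ODE asymptotics at an irregular singular point (the closed half-plane maps to $|\arg\nu|\le\pi/3$, a closed subsector of the region where $e^{-\nu}$ is recessive, so the standard theory does apply, provided one also verifies that $J_n$ is the recessive solution there, which follows from its known decay on the positive real axis). Note that the paper's proof is not more self-contained on this point: substituting leading-order asymptotics into both sides of \eqref{lem4.2} and concluding a sign contradiction likewise requires uniform control of the error terms in those integrals for $|z_0|>R$, so the two proofs rest on the same asymptotic input; they simply exploit it differently, with the paper's version fitting naturally as a continuation of the preceding lemma's machinery.
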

\begin{proof}
  Subtracting \eqref{int2} from \eqref{int1}, we have
  \be\label{lem4.1}
  0=\int_1^\infty t(f'g''-f''g')dt+2(z_0^2-\bar{z}_0^2)\int_1^\infty fgdt.
  \ee
  That is,
  \be\label{lem4.2}
  4x_0y_0\int_1^\infty \left|J_n(z_0t)\right|^2dt
  =|z_0|^2\int_1^\infty \operatorname{Im}
  \left(\bar{z}_0tJ_{n-1}(z_0t)J_{n-2}(\bar{z}_0t)\right) dt.
  \ee
  In the domain $D$, $J_n(z)$ is well approximated by the leading
  term of its asymptotic expansion. Let $z_0=r_0 e^{i\theta_0}$ with $r_0>0$
  and $\theta_0\in [-\pi/2,\pi/2]$. Substituting the leading terms
  of the asymptotic expansions into both sides of \eqref{lem4.2}
  and simplifying the resulting expressions, we obtain
  \be
  \sin(2\theta_0)\sim-\sin(2\theta_0/3).
  \ee
  In other words, two sides of \eqref{lem4.2} have opposite sign unless they
  are both equal to zero, \textit{i.e.}, unless $\theta_0=0$ or $z_0$ is a positive
  real number. However, $J_n(x)>0$ when $x>0$, as seen from its integral
  representation \eqref{intrep}. And the lemma follows.
\end{proof}

\section{The coefficients of modified Laurent series for $J_n$}
\label{coeftables}

We list the coefficients $c_j$ of modified Laurent series for evaluating
$J_n$ ($n=-1$, 0, 1, and 2) on each quarter-annulus domain $Q_i$ ($i=1$,
2, and 3) in Tables~\ref{tabm1a}--\ref{tab2c}. That is,
\be
  J_n(z)
  \approx 
  \sqrt{\frac{\pi}{3}}
  \left(\frac{\nu}{3}\right)^{n/2} 
  e^{-\nu}
  \nu^{N_2} 
  \sum_{j=0}^{N_T-1} c_j 
  \left(\frac{1}{\nu}\right)^j 
  ,
\label{b1}
\ee
where $\nu := 3 \left(\frac{z}{2}\right)^{2/3}$. \eqref{b1} is
obtained by combining \eqref{asymtransform} and \eqref{laurentapprox},
and rewriting the modified Laurent series as a power series in
$\frac{1}{\nu}$ by pulling out the factor $\nu^{N_2}$.

\begin{table}[htbp!]
   \caption{The coefficients $c_j$ ($j=0,\, \ldots,\, 31$) of the
     modified Laurent series given by \eqref{b1} 
     to evaluate $J_{-1}(z)$ to $19$-digit precision in $Q_1 :=
     \{z\in \mathbb{C}\, |\, \operatorname{Re}(z) \ge 0,
     \operatorname{Im}(z)\ge 0, 1 \le |z| \le 3 \}$. $N_2=10$.
} 
\scriptsize \centering
\begin{tabular}{SS}
  \toprule
      {real part} & {imaginary part}\\
      \midrule
      0.50840463208260678152D-17 & -0.17460815299463749948D-15 \\
     -0.74591223502642620660D-14 &  0.12462600200296453012D-13 \\
      0.51034244856324824207D-12 & -0.29429847146968217669D-12 \\
     -0.15527853485027100709D-10 &  0.26315851430676356796D-12 \\
      0.26441404512287963095D-09 &  0.13983475139768244907D-09 \\
     -0.24748763871353093363D-08 & -0.37421319823017115933D-08 \\
      0.48226858274090904108D-08 &  0.54340128932660141072D-07 \\
      0.21625355372586607508D-06 & -0.50872099870851161398D-06 \\
     -0.36871705117848123797D-05 &  0.30134016655759593920D-05 \\
      0.34628404889507030160D-04 & -0.73910823070405617219D-05 \\
      0.99977737459069660694D+00 & -0.57603083624530025151D-04 \\
     -0.82327858162819693045D-01 &  0.84976858037261402153D-03 \\
      0.61974789354573766566D-03 & -0.60513938190315115982D-02 \\
      0.56615182294768079637D-01 &  0.30290788934912727755D-01 \\
     -0.13513677999109029679D+00 & -0.11595801511190682178D+00 \\
      0.20971815296188580167D+00 &  0.34917394460827128828D+00 \\
     -0.13559302399958735143D+00 & -0.83031652814884108813D+00 \\
     -0.39989898854107271642D+00 &  0.15322617865070516148D+01 \\
      0.17398271638333840850D+01 & -0.20720742240832378654D+01 \\
     -0.38218064277297175142D+01 &  0.16655645078067718066D+01 \\
      0.57404644363343330931D+01 &  0.36441373700438752895D+00 \\
     -0.60218557453822568030D+01 & -0.36699889432071554424D+01 \\
      0.38664098293378463250D+01 &  0.65632365306504714202D+01 \\
     -0.25697949139671290942D+00 & -0.71730334400727831101D+01 \\
     -0.26138368668366285752D+01 &  0.52101729871789289259D+01 \\
      0.33128062049048194583D+01 & -0.22744720239035355439D+01 \\
     -0.22947550138820496670D+01 &  0.21824868540130264477D+00 \\
      0.97998841946268481518D+00 &  0.43081914671714156557D+00 \\
     -0.23273278868918701241D+00 & -0.30881694364539401656D+00 \\
      0.13573816724649184659D-01 &  0.10103591470624091510D+00 \\
      0.64717665310787895482D-02 & -0.16204976273553372187D-01 \\
     -0.11353082240496407813D-02 &  0.91111645509511076869D-03 \\
 \bottomrule
 \end{tabular}
 \label{tabm1a}
 \end{table}

\begin{table}[htbp!]
    \caption{Similar to Table~\ref{tabm1a}, $c_j$ ($j=0,\, \ldots,\, 29$)
      for $J_{-1}(z)$ in $Q_2=\{z\in \mathbb{C}\, |\,
      \operatorname{Re}(z)\ge 0, \operatorname{Im}(z)\ge 0, 3\le |z|
      \le 15\}$. $N_2=0$.
%
} 
\scriptsize \centering
 \begin{tabular}{SS}
 \toprule
 {real part} & {imaginary part}\\
 \midrule
      0.99999999999996165301D+00 &  0.14180683234758492536D-12 \\
     -0.83333333315888343156D-01 & -0.18355475502542401539D-10 \\
      0.34722202099214306218D-02 &  0.66429512090231781628D-09 \\
      0.55459217936935525195D-01 &  0.19209070325965982796D-07 \\
     -0.17477009309488548835D+00 & -0.27235872415655243493D-05 \\
      0.47557985079285319878D+00 &  0.12329339800149018587D-03 \\
     -0.12044719601488244381D+01 & -0.33379989131254858384D-02 \\
      0.24160534977076998585D+01 &  0.59920404647268786033D-01 \\
      0.71401934124020221324D+00 & -0.69764699651584141417D+00 \\
     -0.60367540682210374145D+02 &  0.38607796239007672158D+01 \\
      0.60545135048209986187D+03 &  0.32429279475615845398D+02 \\
     -0.45946367108344566727D+04 & -0.10841949093356820460D+04 \\
      0.28358573752155457724D+05 &  0.14766714227455119633D+05 \\
     -0.13554840952273842275D+06 & -0.13629699320708838668D+06 \\
      0.42722335885416276983D+06 &  0.93640538562551055857D+06 \\
     -0.18717734419017137932D+06 & -0.49014636853552340206D+07 \\
     -0.76674698133130508647D+07 &  0.19293996919633486842D+08 \\
      0.56621620119877490002D+08 & -0.53379687761932413868D+08 \\
     -0.24544626054098569413D+09 &  0.76969984306318530811D+08 \\
      0.73652406083022339655D+09 &  0.11898870845017090857D+09 \\
     -0.15200293963699011585D+10 & -0.11167744707665950837D+10 \\
      0.18157636339201652460D+10 &  0.37001812450286450398D+10 \\
      0.23697005105214074056D+09 & -0.76973235212132828329D+10 \\
     -0.60541865274209691412D+10 &  0.10509437050190981895D+11 \\
      0.13447347591183417529D+11 & -0.82869660102657042317D+10 \\
     -0.16538600326905832899D+11 &  0.87889333133786055548D+09 \\
      0.12108038654949012813D+11 &  0.58320160548830925371D+10 \\
     -0.45881323770532016082D+10 & -0.64591210758282531847D+10 \\
      0.33559769561348792357D+09 &  0.30012974946895292083D+10 \\
      0.21590442067376607526D+09 & -0.51553627638896435829D+09 \\
 \bottomrule
 \end{tabular}
 \label{tabm1b}
 \end{table}

 \begin{table}[htbp!]
     \caption{Similar to Table~\ref{tabm1a}, $c_j$ ($j=0,\, \ldots,\,
       19$) for $J_{-1}(z)$ in $Q_3=\{z\in \mathbb{C}\, |\,
       \operatorname{Re}(z)\ge 0, \operatorname{Im}(z)\ge 0, 15\le |z|
       \le 120\}$. $N_2=0$.
} 
\scriptsize \centering
\begin{tabular}{SS}
 \toprule
 {real part} & {imaginary part}\\
 \midrule
      0.10000000000000000211D+01 &  0.17867305969317471010D-16 \\
     -0.83333333333337062447D-01 & -0.97723166437483860903D-14 \\
      0.34722222219662307873D-02 &  0.18575099531415563550D-11 \\
      0.55459105063779291569D-01 & -0.17036760654072501033D-09 \\
     -0.17476652435372606609D+00 &  0.72097356819624208386D-08 \\
      0.47552180369947961103D+00 &  0.45886722797104214745D-07 \\
     -0.12045748284986630605D+01 & -0.23432576523368445886D-04 \\
      0.24476460069464141708D+01 &  0.14488404302693181855D-02 \\
     -0.19443570247379529707D+00 & -0.51284106279947756551D-01 \\
     -0.44775070512394599808D+02 &  0.11973398083848165562D+01 \\
      0.42163459709409223079D+03 & -0.18732584217083190217D+02 \\
     -0.30990846226113832847D+04 &  0.18064501304516396811D+03 \\
      0.20913884916390585368D+05 & -0.59296211153624558148D+03 \\
     -0.12895996355054874785D+06 & -0.10821482660282026169D+05 \\
      0.67085295525681611041D+06 &  0.19808862750865237678D+06 \\
     -0.26337877182586616150D+07 & -0.16813332113288193078D+07 \\
      0.67096341894817561042D+07 &  0.85931950401381414532D+07 \\
     -0.76571281908120841443D+07 & -0.26572627858717182234D+08 \\
     -0.59803448026875748509D+07 &  0.44801684284187004703D+08 \\
      0.19209322347765871037D+08 & -0.30066013610259277074D+08 \\
 \bottomrule
 \end{tabular}
 \label{tabm1}
 \end{table}

 \begin{table}[htbp!]
   \caption{The coefficients $c_j$ ($j=0,\, \ldots,\, 29$) of the
     modified Laurent series  given by \eqref{b1} to evaluate $J_{0}(z)$ 
     to $19$-digit 
     precision in $Q_1 := \{z\in \mathbb{C}\, |\,
     \operatorname{Re}(z)\ge 0, \operatorname{Im}(z)\ge 0, 1\le |z|
     \le 3\}$. $N_2=11$.
} \scriptsize \centering
 \begin{tabular}{SS}
 \toprule
 {real part} & {imaginary part}\\
 \midrule
     -0.90832607641433626723D-16 & -0.12971716857438253177D-15 \\
      0.12389804620230878343D-14 &  0.12374086345769475560D-13 \\
      0.18925665936446973863D-12 & -0.43594042425001909808D-12 \\
     -0.93436124699082728782D-11 &  0.71610240171455947215D-11 \\
      0.21005471373426356192D-09 & -0.31570720100508497322D-10 \\
     -0.27921469604412283831D-08 & -0.10267754766776108185D-08 \\
      0.22210697286892781643D-07 &  0.25281961277933484578D-07 \\
     -0.71494613675879873372D-07 & -0.30771058781715872427D-06 \\
     -0.63954539217286436591D-06 &  0.24264961215641857661D-05 \\
      0.11477599934330755236D-04 & -0.12692783111400141826D-04 \\
     -0.94447601385518738118D-04 &  0.36582122360442313063D-04 \\
      0.10005227131416389419D+01 &  0.42527679632933790645D-04 \\
     -0.85416339695541973974D-01 & -0.11668274275597700974D-02 \\
      0.92680088758957499003D-01 &  0.75525130431164108091D-02 \\
     -0.12830962183399732914D+00 & -0.31999279672322587569D-01 \\
      0.18262801902460105636D+00 &  0.10112601789451786253D+00 \\
     -0.22086524963618477505D+00 & -0.24794786230219646677D+00 \\
      0.16700979504519707012D+00 &  0.47574461283438653640D+00 \\
      0.62230841342939847177D-01 & -0.70546898276037268906D+00 \\
     -0.46160590435660301333D+00 &  0.77405336431438665012D+00 \\
      0.86282905145667261861D+00 & -0.54883285003677036633D+00 \\
     -0.10157725409050061178D+01 &  0.89653554833115686588D-01 \\
      0.81127042933258073193D+00 &  0.34223426976661392785D+00 \\
     -0.40588870780744941328D+00 & -0.50414950527612358637D+00 \\
      0.70646295770216518095D-01 &  0.39020053902964327900D+00 \\
      0.62576278207207491689D-01 & -0.18665248819340615755D+00 \\
     -0.56069463746156990540D-01 &  0.51369345024189480453D-01 \\
      0.20854565059593331201D-01 & -0.49622460298807761080D-02 \\
     -0.37736411371630856848D-02 & -0.10706620427594972634D-02 \\
      0.24658062816300990462D-03 &  0.24793548656986345025D-03 \\
 \bottomrule
 \end{tabular}
 \label{tab0a}
 \end{table}

 \begin{table}[thp!]
    \caption{Similar to Table~\ref{tab0a}, $c_j$ ($j=0,\, \ldots,\,
      29$) for $J_{0}(z)$ in $Q_2 := \{z\in \mathbb{C}\, |\,
      \operatorname{Re}(z)\ge 0, \operatorname{Im}(z)\ge 0, 3\le |z|
      \le 15\}$. $N_2=0$.
%
} \scriptsize \centering
    \begin{tabular}{SS}
      \toprule
          {real part} & {imaginary part}\\
          \midrule
      0.99999999999988637217D+00 & -0.86635400939375232846D-13 \\
     -0.83333333323131499972D-01 &  0.22519321671024025250D-10 \\
      0.86805555689429826898D-01 & -0.20734497580441697938D-08 \\
     -0.11815206514175737947D+00 &  0.96339270441630372077D-07 \\
      0.17969333057187777654D+00 & -0.23057043651727157918D-05 \\
     -0.24337342169790375842D+00 &  0.98098354524518453957D-05 \\
      0.14764429473872620763D-01 &  0.12782321450918857049D-02 \\
      0.23309627937902507555D+01 & -0.51073707172260220779D-01 \\
     -0.16480981490288764106D+02 &  0.11291585827876199238D+01 \\
      0.92305117544447684520D+02 & -0.17262096251706180600D+02 \\
     -0.51600417922753718015D+03 &  0.19328752764190907274D+03 \\
      0.32464931691139105961D+04 & -0.15901053336317843819D+04 \\
     -0.22314060512920598345D+05 &  0.90599663123843335441D+04 \\
      0.14604708231321899614D+06 & -0.26233323039837437629D+05 \\
     -0.81145513740976803311D+06 & -0.98104807756320344084D+05 \\
      0.35457971304583219348D+07 &  0.18294948704040264718D+07 \\
     -0.11084057351725538629D+08 & -0.13170160809055557556D+08 \\
      0.17946742587727355394D+08 &  0.62986605485232028006D+08 \\
      0.35800094666248698896D+08 & -0.21614451081432945159D+09 \\
     -0.37304955427569800721D+09 &  0.52194699072674118229D+09 \\
      0.14479420488451058753D+10 & -0.75867692917492941354D+09 \\
     -0.35951788816564166863D+10 &  0.22618398696580184017D+08 \\
      0.60001115021506662033D+10 &  0.31006219604826621451D+10 \\
     -0.60780548332669974319D+10 & -0.87891548726761143921D+10 \\
      0.15576242961336566425D+10 &  0.13885136549901956758D+11 \\
      0.55250083598538675266D+10 & -0.13622496533575828228D+11 \\
     -0.92625760934489411655D+10 &  0.75732780185372212772D+10 \\
      0.69543035636142439747D+10 & -0.12850366647773799533D+10 \\
     -0.25630570764916110006D+10 & -0.85541751450691424086D+09 \\
      0.33863352297553708594D+09 &  0.36939690751181780209D+09 \\
 \bottomrule
 \end{tabular}
 \label{tab0b}
 \end{table}

 \begin{table}[htbp!]
    \caption{Similar to Table~\ref{tab0a}, $c_j$ ($j=0,\, \ldots,\,
      19$) for $J_{0}(z)$ in $Q_3 := \{z\in \mathbb{C}\, |\,
      \operatorname{Re}(z)\ge 0, \operatorname{Im}(z)\ge 0, 15\le |z|
      \le 120\}$. $N_2=0$.
} \scriptsize \centering
 \begin{tabular}{SS}
 \toprule
 {real part} & {imaginary part}\\
 \midrule
      0.99999999999999996930D+00 &  0.17593864033911935746D-16 \\
     -0.83333333333319900902D-01 & -0.16846147898292977950D-15 \\
      0.86805555553423816181D-01 & -0.11453394704160148063D-11 \\
     -0.11815200602347672298D+00 &  0.23048385969290284678D-09 \\
      0.17968950772370040285D+00 & -0.21945385998958748345D-07 \\
     -0.24323777650814324772D+00 &  0.12087954626873876383D-05 \\
      0.11700570395152938411D-01 & -0.38103854798729283635D-04 \\
      0.23745206848674586241D+01 &  0.41346400173493025458D-03 \\
     -0.16758837066520389957D+02 &  0.20437630577696960942D-01 \\
      0.88966545765421650942D+02 & -0.12064317238786081373D+01 \\
     -0.39494374065352553320D+03 &  0.33912080437719049806D+02 \\
      0.14059897264393353350D+04 & -0.62474820464709927286D+03 \\
     -0.40451355322086019935D+04 &  0.80876358650678996996D+04 \\
      0.19384543104359811933D+05 & -0.74559599790322239386D+05 \\
     -0.20517688352680136030D+06 &  0.48086289900895412320D+06 \\
      0.17043521840830888384D+07 & -0.20518647360918521409D+07 \\
     -0.87705077033153779372D+07 &  0.49903982905075629408D+07 \\
      0.26797195290998643893D+08 & -0.30874807345514436382D+07 \\
     -0.43448795451180985546D+08 & -0.14198190720585577590D+08 \\
      0.26693074419888988636D+08 &  0.25674511583029811722D+08 \\
 \bottomrule
 \end{tabular}
 \label{tab0c}
 \end{table}

 \begin{table}[thbp!]
   \caption{The coefficients $c_j$ ($j=0,\, \ldots,\, 29$) of the
     modified Laurent series given by \eqref{b1} for evaluation of
     $J_{1}(z)$ to $19$-digit precision in $Q_1 := \{ z\in
     \mathbb{C}\, |\, \operatorname{Re}(z)\ge 0,
     \operatorname{Im}(z)\ge 0, 1\le |z| \le 3 \}$. $N_2=11$.
} \scriptsize \centering
 \begin{tabular}{SS}
 \toprule
 {real part} & {imaginary part}\\
 \midrule
      0.11005198342846485755D-15 & -0.69497479897694901798D-16 \\
     -0.10253717390952836750D-13 &  0.57344733061298400754D-15 \\
      0.35541980746147250213D-12 &  0.17138035947739436987D-12 \\
     -0.57065663426773316925D-11 & -0.80097752680384861353D-11 \\
      0.21377942402322032801D-10 &  0.17745761143488866634D-09 \\
      0.92504177773563681659D-09 & -0.23491690282446912066D-08 \\
     -0.21958132206773784869D-07 &  0.18715255541162079236D-07 \\
      0.26744146813435088020D-06 & -0.60538192727561566571D-07 \\
     -0.21417964338884026393D-05 & -0.55166777471060148467D-06 \\
      0.11605229803403698059D-04 &  0.10089206827157776627D-04 \\
     -0.36902286245955926331D-04 & -0.85734523630464858844D-04 \\
      0.99998886026520537047D+00 &  0.49984760395796500059D-03 \\
      0.41764834336748872867D+00 & -0.21759152835287886250D-02 \\
     -0.12880435876254801279D+00 &  0.72384623466304053406D-02 \\
      0.10001797075393494104D+00 & -0.18204681378409207336D-01 \\
     -0.12326994533416519599D+00 &  0.32494324768891981532D-01 \\
      0.19390815724013910177D+00 & -0.31048056008057556412D-01 \\
     -0.30446220948476603076D+00 & -0.28094371375086694729D-01 \\
      0.40217178304394830919D+00 &  0.18701432829068936487D+00 \\
     -0.39045411378013014641D+00 & -0.42965195739247365992D+00 \\
      0.20299486067051564492D+00 &  0.64270959413495860386D+00 \\
      0.10086985423793876517D+00 & -0.67712707054748086683D+00 \\
     -0.34706311627097173040D+00 &  0.48943085630911667832D+00 \\
      0.39717127772830854281D+00 & -0.20388651987798637536D+00 \\
     -0.27627970190658614495D+00 & -0.36920639378264876881D-02 \\
      0.12093911370868755832D+00 &  0.67168398413971524804D-01 \\
     -0.28845724988884421284D-01 & -0.45442309954198940539D-01 \\
      0.97253793671434469328D-03 &  0.15236745276873557399D-01 \\
      0.11989911484170176670D-02 & -0.25400926250086296020D-02 \\
     -0.20436565348663071365D-03 &  0.14672057879876671250D-03 \\
 \bottomrule
 \end{tabular}
 \label{tab1a}
 \end{table}

\begin{table}[htbp!]
  \caption{Similar to Table~\ref{tab1a}, $c_j$ ($j=0,\, \ldots,\,
    29$) for $J_{1}(z)$ in $Q_2 := \{z\in \mathbb{C}\, |\,
    \operatorname{Re}(z)\ge 0, \operatorname{Im}(z)\ge 0, 3\le |z|
    \le 15 \}$. $N_2=0$.
%
} \scriptsize \centering
 \begin{tabular}{SS}
 \toprule
 {real part} & {imaginary part}\\
 \midrule
      0.10000000000001559822D+01 & -0.64432580975613771082D-13 \\
      0.41666666663765045792D+00 & -0.30929290380316585308D-11 \\
     -0.12152777575602031881D+00 &  0.13847887495450779512D-08 \\
      0.64139599010730684601D-01 & -0.11804143890505963878D-06 \\
      0.19340333876868250914D-01 &  0.52525129103659222882D-05 \\
     -0.31085396288117458325D+00 & -0.14209942615180352958D-03 \\
      0.14076112393497740595D+01 &  0.22843270818239797021D-02 \\
     -0.53034603582858591137D+01 & -0.12508426902104267053D-01 \\
      0.16843969837042581097D+02 & -0.39633859151173122745D+00 \\
     -0.30896103962008743537D+02 &  0.13366867242921989470D+02 \\
     -0.12354078658896338072D+03 & -0.23100296557839310161D+03 \\
      0.16486028729189802772D+04 &  0.27798011316789725915D+04 \\
     -0.79365600281962572617D+04 & -0.25085379438345641690D+05 \\
     -0.17876338162032969792D+04 &  0.17337711302201102756D+06 \\
      0.36042195374432098049D+06 & -0.90932303898462350515D+06 \\
     -0.33818329308700649502D+07 &  0.34283955443872702745D+07 \\
      0.19449202897749494834D+08 & -0.74625499598352743942D+07 \\
     -0.79047083372251398893D+08 & -0.61704036445636743642D+07 \\
      0.22787889419550420217D+09 &  0.13542454969531036697D+09 \\
     -0.41550638595000604177D+09 & -0.65483373280300662671D+09 \\
      0.17290712369617688112D+09 &  0.19664895233379210138D+10 \\
      0.16763777971399520871D+10 & -0.40005075227615193294D+10 \\
     -0.63097117018939634689D+10 &  0.51442671230430695393D+10 \\
      0.12639623708614918540D+11 & -0.24250533919233200087D+10 \\
     -0.16021424838185937866D+11 & -0.51063223260105245852D+10 \\
      0.12194470039177973074D+11 &  0.12801295658115048467D+11 \\
     -0.36617569740928099053D+10 & -0.13906995324951658335D+11 \\
     -0.20882430849265640704D+10 &  0.82351413910371916318D+10 \\
      0.22256324759689985206D+10 & -0.23606999259817906485D+10 \\
     -0.57357275466466452587D+09 &  0.18100167963268264995D+09 \\
 \bottomrule
 \end{tabular}
 \label{tab1b}
 \end{table}

 \begin{table}[htbp!]
   \caption{Similar to Table~\ref{tab1a}, $c_j$ ($j=0,\, \ldots,\,
       19$) for $J_{1}(z)$ in $Q_3 := \{z\in \mathbb{C}\, |\,
       \operatorname{Re}(z) \ge 0, \operatorname{Im}(z)\ge 0, 15\le |z|
       \le 120 \}$. $N_2=0$.
} \scriptsize \centering
 \begin{tabular}{SS}
 \toprule
 {real part} & {imaginary part}\\
 \midrule
      0.10000000000000000088D+01 & -0.37104682094436741073D-16 \\
      0.41666666666665693268D+00 &  0.10633105786560679943D-13 \\
     -0.12152777777532530135D+00 & -0.81783483309751920964D-12 \\
      0.64139660206844395055D-01 & -0.53206746309599215652D-10 \\
      0.19340376146506349534D-01 &  0.14714309085259108266D-07 \\
     -0.31092901473600760433D+00 & -0.12847633757844741159D-05 \\
      0.14108230204421526148D+01 &  0.64029804309267135469D-04 \\
     -0.53814287035192935174D+01 & -0.19729163937458920069D-02 \\
      0.18099040506545928510D+02 &  0.33959266046710551528D-01 \\
     -0.44222521101771005259D+02 & -0.46443129149364017364D-01 \\
     -0.49281712505609892221D+02 & -0.14673338698274539790D+02 \\
      0.19120731039799297866D+04 &  0.44654655766491527724D+03 \\
     -0.18480296407139556113D+05 & -0.76529257176182418914D+04 \\
      0.11949451283301751133D+06 &  0.88037548115504996527D+05 \\
     -0.51411364057002663824D+06 & -0.70563400281206830770D+06 \\
      0.10973535392819828712D+07 &  0.39099452669156576703D+07 \\
      0.17331435565199135031D+07 & -0.14355007661133573735D+08 \\
     -0.19127195748032646177D+08 &  0.31757253245227946371D+08 \\
      0.50801574443329963657D+08 & -0.33700657946301331333D+08 \\
     -0.47910288059234253994D+08 &  0.61171907037609011958D+07 \\
 \bottomrule
 \end{tabular}
 \label{tab1c}
 \end{table}

 \begin{table}[htbp!]
    \caption{The coefficients $c_j$ ($j=0,\, \ldots,\, 31$) of the
      modified Laurent series given by \eqref{b1} to evaluate
      $J_{2}(z)$ to $19$-digit precision in $Q_1=\{z\in
      \mathbb{C}\, |\, \operatorname{Re}(z)\ge 0,
      \operatorname{Im}(z)\ge 0, 1\le |z| \le 3\}$. $N_2=11$.
%
} \scriptsize \centering
\begin{tabular}{SS}
\toprule
 {real part} & {imaginary part}\\
 \midrule
      0.31866632685819612221D-16 &  0.62278738969830137987D-16 \\
      0.23374202488114714431D-15 & -0.58368186829092031391D-14 \\
     -0.12464763262921601144D-12 &  0.20259199528722770999D-12 \\
      0.54938914867389395195D-11 & -0.30688169770355401691D-11 \\
     -0.12158575842106281373D-09 &  0.20706119567919592298D-12 \\
      0.16018003273928560165D-08 &  0.88216842473436341238D-09 \\
     -0.11941475881449767865D-07 & -0.18815539800292924303D-07 \\
      0.15306868790345339446D-07 &  0.22567034551573649710D-06 \\
      0.80407254481241384544D-06 & -0.17772860498030423720D-05 \\
     -0.11465040286971344849D-04 &  0.88993097363922792729D-05 \\
      0.92698417450695281624D-04 & -0.17153044258356809874D-04 \\
      0.99948171991549382451D+00 & -0.15046194458696453245D-03 \\
      0.14187062252412931802D+01 &  0.18349141691729387994D-02 \\
     -0.12647835873372535821D+00 & -0.11428276705995908659D-01 \\
      0.18603598111474663325D+00 &  0.50646899433597524505D-01 \\
     -0.28764948748580855321D+00 & -0.17268243476553171463D+00 \\
      0.37671933372380252436D+00 &  0.46496076480920682474D+00 \\
     -0.28673562730306685162D+00 & -0.99215999495574618374D+00 \\
     -0.25260802503698437799D+00 &  0.16511988403580568251D+01 \\
      0.14171261472971538418D+01 & -0.20381244501795426809D+01 \\
     -0.29457967249708858073D+01 &  0.15782077203635430607D+01 \\
      0.40284406518941590462D+01 & -0.37723354335564292819D-01 \\
     -0.38128950013920692643D+01 & -0.19907064876530093167D+01 \\
      0.22475249180098600179D+01 &  0.33343987769848875142D+01 \\
     -0.29845507995469994980D+00 & -0.32351706541913757406D+01 \\
     -0.89544287444481992984D+00 &  0.20454467870089365942D+01 \\
      0.10175639567579240708D+01 & -0.77474351850574639868D+00 \\
     -0.58788778865800093021D+00 &  0.83503119281933322786D-01 \\
      0.20009319020081182763D+00 &  0.77689730095477593636D-01 \\
     -0.35965432090839096074D-01 & -0.43797603367743883119D-01 \\
      0.16769420558534530117D-02 &  0.95756365430182522746D-02 \\
      0.27256710553195448121D-03 & -0.76612682266254092889D-03 \\
 \bottomrule
 \end{tabular}
 \label{tab2a}
 \end{table}

 \begin{table}[htbp!]
     \caption{Similar to Table~\ref{tab2a}, $c_j$ ($j=0,\, \ldots,\,
       29$) for $J_{2}(z)$ in $Q_2=\{z\in \mathbb{C}\, |\,
       \operatorname{Re}(z)\ge 0, \operatorname{Im}(z)\ge 0, 3\le |z|
       \le 15\}$. $N_2=0$.
%
} \scriptsize \centering
 \begin{tabular}{SS}
 \toprule
 {real part} & {imaginary part}\\
 \midrule
      0.99999999999998061808D+00 &  0.16248252113309315678D-12 \\
      0.14166666666829275403D+01 & -0.23049278428420443693D-10 \\
     -0.12152777988809406380D+00 &  0.10578621411215754890D-08 \\
      0.18566756593083182116D+00 &  0.28424278401127180014D-08 \\
     -0.35199891174698963256D+00 & -0.24190974576544113333D-05 \\
      0.74514028477189295514D+00 &  0.12627505833721768389D-03 \\
     -0.15698688665407300795D+01 & -0.36906906699773089097D-02 \\
      0.24402663496436078603D+01 &  0.71150880132384387316D-01 \\
      0.42564778384044014501D+01 & -0.92028028234742281569D+00 \\
     -0.86628781485086884613D+02 &  0.69659670041612025801D+01 \\
      0.76780431897157477513D+03 &  0.12336022855791130638D+01 \\
     -0.56054291413858448509D+04 & -0.86886243977848514246D+03 \\
      0.34709760011543563256D+05 &  0.13997432791721954713D+05 \\
     -0.17268737514629848298D+06 & -0.13922909431496294516D+06 \\
      0.61187840164952315907D+06 &  0.10063302790914770506D+07 \\
     -0.90229280253909143653D+06 & -0.55090070720520207541D+07 \\
     -0.58239693510024439315D+07 &  0.22810067701144771236D+08 \\
      0.55585305303478347757D+08 & -0.68226520995783255732D+08 \\
     -0.26301929032497527308D+09 &  0.12293641811239037282D+09 \\
      0.84052909616481142218D+09 &  0.21196827324494884533D+08 \\
     -0.18657792154606295188D+10 & -0.10131267803607086455D+10 \\
      0.25915388904113076082D+10 &  0.38427156992160196076D+10 \\
     -0.92517922470966062167D+09 & -0.86034302033451782637D+10 \\
     -0.51036926935170978903D+10 &  0.12626524314344868926D+11 \\
      0.13657170819172273067D+11 & -0.11301642152404259095D+11 \\
     -0.18234815409451008101D+11 &  0.35546723514474331442D+10 \\
      0.14355121657220596589D+11 &  0.45839423517954065893D+10 \\
     -0.61032052837541757328D+10 & -0.64509344322401839565D+10 \\
      0.84275175499628369228D+09 &  0.32773844995198787342D+10 \\
      0.15857079566801991882D+09 & -0.60570352545416858098D+09 \\
 \bottomrule
 \end{tabular}
 \label{tab2b}
 \end{table}

 \begin{table}[th]
     \caption{Similar to Table~\ref{tab2a}, $c_j$ ($j=0,\, \ldots,\,
       19$) for $J_{2}(z)$ in $Q_3=\{z\in \mathbb{C}\, |\,
       \operatorname{Re}(z)\ge 0, \operatorname{Im}(z)\ge 0, 15\le |z|
       \le 120\}$. $N_2=0$.
} \scriptsize \centering
 \begin{tabular}{SS}
 \toprule
 {real part} & {imaginary part}\\
 \midrule
      0.10000000000000000268D+01 &  0.16274386801955295004D-16 \\
      0.14166666666666607632D+01 & -0.10287883462694876761D-13 \\
     -0.12152777777773596334D+00 &  0.21278064340420781315D-11 \\
      0.18566743838222558548D+00 & -0.21346739588855205915D-09 \\
     -0.35199453421212974648D+00 &  0.10810020949854670351D-07 \\
      0.74505620178581604077D+00 & -0.12758335415838672536D-06 \\
     -0.15694400222283724037D+01 & -0.19194750913862712585D-04 \\
      0.24655058999322356159D+01 &  0.14693073094005325564D-02 \\
      0.33607089142515892731D+01 & -0.57088317524506061035D-01 \\
     -0.69853332457831300099D+02 &  0.14378014381758075690D+01 \\
      0.55610818499114251623D+03 & -0.24643410535733376707D+02 \\
     -0.37382459971986918636D+04 &  0.27993273451335282871D+03 \\
      0.23868572841373643577D+05 & -0.17730529258353887568D+04 \\
     -0.14415275031702837209D+06 & -0.95697379334253398976D+03 \\
      0.75853591811873756981D+06 &  0.14271567167383035412D+06 \\
     -0.30972861668769145538D+07 & -0.15003099051227929984D+07 \\
      0.85376658237794416873D+07 &  0.84572529844164518814D+07 \\
     -0.12288668565761833934D+08 & -0.27945310142205107664D+08 \\
      0.26869963259519375012D+06 &  0.49973783788775688447D+08 \\
      0.16376853579174702084D+08 & -0.35949830322064479962D+08 \\
 \bottomrule
 \end{tabular}
 \label{tab2c}
 \end{table}

\bibliographystyle{plain}
\bibliography{abram4d}

\end{document}